\tikzstyle{vertex}=[circle, draw, scale=0.5, fill=white] 
\newtheorem{theorem}{Theorem}[section]
\newtheorem{lemma}[theorem]{Lemma}
\newtheorem{defn}[theorem]{Definition}
\newtheorem{cor}[theorem]{Corollary}
\newtheorem{conj}[theorem]{Conjecture}
\newtheorem{claim}[theorem]{Claim}
\theoremstyle{definition}
\newtheorem{assump}{Assumption}
\def\komment#1{}
\let\komment=\footnote
\title{On the clique number of the square of a line graph and its relation to Ore-degree}
\author{
Maxime Faron
\thanks{Ecole Normale Superieure de Lyon, Lyon, France. Email: {\tt maxime.faron@ens-lyon.org}.}
\and
Luke Postle
\thanks{Department of Combinatorics and Optimization,  University of Waterloo. Email: {\tt lpostle@uwaterloo.ca}. Canada Research Chair in Graph Theory. Partially supported by NSERC under Discovery Grant No. 2014-06162, the Ontario Early Researcher Awards program and the Canada Research Chairs program.}}
\begin{document}
\maketitle

\begin{abstract}
\noindent In 1985, Erd\H{o}s and Ne\v{s}et\v{r}il conjectured that the square of the line graph of a graph $G$, that is $L(G)^2$, can be colored with $\frac{5}{4}\Delta(G)^2$ colors. This conjecture implies the weaker conjecture that the clique number of such a graph, that is $\omega(L(G)^2)$, is at most $\frac{5}{4}\Delta(G)^2$. In 2015, \'Sleszy\'nska-Nowak proved that $\omega(L(G)^2)\le \frac{3}{2}\Delta(G)^2$. In this paper, we prove that $\omega(L(G)^2)\le \frac{4}{3}\Delta(G)^2$. This theorem follows from our stronger result that $\omega(L(G)^2)\le \frac{\sigma(G)^2}{3}$ where $\sigma(G) := \max_{uv\in E(G)} d(u) + d(v)$, is the Ore-degree of the graph $G$. 
\end{abstract}

\section{Introduction}

The \emph{strong chromatic index}, $\chi'_s(G)$, of a graph $G$ is defined as the least integer $k$ for which there exists a $k$-coloring of $E(G)$ such that edges at distance at most $2$ receive different colors. Equivalently, $\chi'_s(G) = \chi (L(G)^2)$, where $L(G)^2$ denotes the square of the line graph of $G$. Since $\Delta(L(G)^2) < 2\Delta(G)^2$, the trivial upper bound on the chromatic number gives that $\chi'_s(G) \leq 2\Delta(G)^2$. However Erd\H{o}s and Ne\v{s}et\v{r}il (see~\cite{EN,FGHT1}) conjectured a much stronger upper bound as follows.

\begin{conj}\label{conj:EN1}
If $G$ is a graph, then $\chi'_s(G) \leq 1.25 \Delta(G)^2$.
\end{conj}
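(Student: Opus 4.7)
The plan is to attack this via the probabilistic method, combining a random partial coloring with the Lov\'asz Local Lemma (LLL) and an iterative ``nibble'' refinement, in the style developed by Molloy and Reed for the strong chromatic index. First I would choose a palette of size $k = \lceil \tfrac{5}{4}\Delta(G)^2 \rceil$ and assign each edge a uniformly random independent color. For a fixed edge $e = uv$, the degree of $e$ in $L(G)^2$ is at most $2\Delta(\Delta-1)$, so the expected number of conflicts at $e$ is roughly $\tfrac{2\Delta^2}{k} \approx \tfrac{8}{5}$. Since this exceeds $1$, a single round of random coloring followed by LLL will not terminate; the plan is therefore to iterate, uncoloring conflicting edges after each round, re-randomizing the surviving edges from a shrinking residual palette, and tracking how the density of uncolored edges decays, finishing with a greedy completion on a sparse remnant.

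The main technical step, and the heart of any such argument, is to obtain a bound strictly better than $2\Delta^2$ on the ``effective'' conflict density around a typical edge. The trivial count $2\Delta(\Delta-1)$ is tight only when both $u$ and $v$ have degree $\Delta$ and their incident edges expand to disjoint fresh neighbors; in such an extremal configuration, however, the edges around $u$ and around $v$ share many common second-neighbors, so the resulting color-collisions are highly correlated. I would try to formalize this via a discharging-type argument on $L(G)^2$, assigning each potential conflict a weight and redistributing so the total weight at any edge is at most $\tfrac{5}{4}\Delta^2 + o(\Delta^2)$. A complementary route, suggested by the clique theorem announced in the abstract, would be to first reduce to the case where the Ore-degree $\sigma(G)$ is close to $2\Delta(G)$, since for $\sigma$ bounded away from $2\Delta$ one could hope to prove the stronger bound $\chi'_s(G) \leq \tfrac{\sigma(G)^2}{3}$ by a chromatic analog of the clique argument.

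The main obstacle is precisely what has kept Conjecture~\ref{conj:EN1} open for nearly four decades: every known probabilistic machinery seems to plateau at $(2-\varepsilon)\Delta(G)^2$ for some small absolute $\varepsilon$, nowhere near $\tfrac{5}{4}\Delta^2$. Reaching the conjectured constant appears to require a genuinely new structural understanding of the presumed extremal configurations (blow-ups of $C_5$), or a fractional and LP-based approach that matches the clique lower bound directly. I would not expect a proof of the full conjecture from the techniques available here; a realistic intermediate target is to lift the clique bound of this paper to a chromatic bound, obtaining $\chi'_s(G) \leq \tfrac{4}{3}\Delta(G)^2$, which would already constitute significant progress toward the $\tfrac{5}{4}$ conjecture.
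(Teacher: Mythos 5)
The statement you were asked about is Conjecture~\ref{conj:EN1}, the Erd\H{o}s--Ne\v{s}et\v{r}il conjecture, which is \emph{not} proved in this paper and remains open. The paper only records it as a conjecture and proves weaker clique-number bounds, so there is no ``paper's own proof'' to compare against. You correctly recognize this: your write-up is a survey of plausible attack routes (random coloring with LLL, an iterative nibble, discharging on $L(G)^2$, reduction via Ore-degree) together with an honest assessment that these techniques plateau well short of $\tfrac{5}{4}\Delta^2$, and you explicitly decline to claim a proof.

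As a proof proposal, then, there is a genuine gap by design: no step is carried out, no lemma is established, and the central difficulty — beating the $2\Delta^2$ conflict count down to $\tfrac{5}{4}\Delta^2$ — is named but not addressed. A few specific cautions on the sketched ideas. The expected-conflict heuristic ``$2\Delta^2/k \approx 8/5 > 1$'' correctly signals that a single LLL round fails, but the iterative nibble of Molloy--Reed only yields a $(1-\varepsilon)$ improvement for tiny $\varepsilon$; there is no known mechanism by which iterating closes the gap to $5/4$. The proposed reduction ``to the case $\sigma(G)$ close to $2\Delta(G)$'' is not a reduction for the chromatic number: unlike cliques, a proper coloring of $L(G)^2$ cannot be assembled edge-by-edge from subgraphs of smaller Ore-degree, so the inductive leverage that powers Theorem~\ref{reduction} for $\omega$ does not transfer to $\chi$. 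Finally, your ``realistic intermediate target'' $\chi'_s(G)\le \tfrac43\Delta^2$ is also out of reach of the paper's methods; what Corollary~\ref{gen_4_3_delta} gives is $\omega(L(G)^2)\le\tfrac43\Delta^2$, and combined with Reed's $\chi_f \le \lceil(\Delta+1+\omega)/2\rceil$ this only yields a \emph{fractional} chromatic bound of $\tfrac53\Delta^2$, not an integral one of $\tfrac43\Delta^2$. In short: your assessment that the conjecture is presently unprovable by these means is accurate, but the intermediate goals you float are themselves either not reductions or not attainable from the clique bound alone.
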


Note that if Conjecture~\ref{conj:EN1} is true, then the bound would be tight as the following example shows. Indeed, if $G_k$ denotes the graph obtained from a $5$-cycle by blowing up each vertex into an independent set of $k$ vertices, then $\Delta(G_k) = 2k$ and $L(G_k)^2$ is a clique with $5 k^2 = 1.25 \Delta(G_k)^2$ vertices. Figure~\ref{fig:blowup} depicts the graph $G_3$.

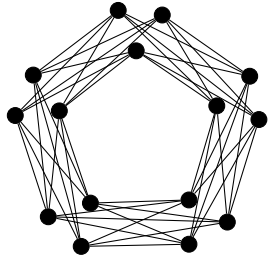
\begin{figure}[h]
\centering
\begin{tikzpicture}[scale=1.7]
\tikzstyle{whitenode}=[draw=white,circle,fill=white,minimum size=0pt,inner sep=0pt]
\tikzstyle{blacknode}=[draw,circle,fill=black,minimum size=6pt,inner sep=0pt]
\draw (0,0) node[whitenode] (a1) {};
\foreach \i in {2,3,4,5} {\pgfmathparse{\i-1} \let\j\pgfmathresult \draw (a\j) ++(72*\j-72:1) node[whitenode] (a\i) {};}
\foreach \i in {1,2,3,4,5} {\foreach \j in {1,2,3} {\draw (a\i) ++(120*\j+72*\i-2*72:0.2) node[blacknode] (b\i\j) {};}}
\foreach \i/\k in {1/2,2/3,3/4,4/5,5/1} {\foreach \j in {1,2,3} {\foreach \l in {1,2,3} {\draw (b\i\j) edge node {} (b\k\l);}}}
\end{tikzpicture}\caption{A blow-up of the $5$-cycle.}\label{fig:blowup}
\end{figure}

In 1997, Molloy and Reed~\cite{MR} made the first step towards Conjecture~\ref{conj:EN1}. They showed that for all graphs $G$, the graph $L(G)^2$ is $1/36$-sparse. Thus the naive coloring procedure guarantees that such a graph can be colored with $(1-\varepsilon)(\Delta(L(G)^2) + 1)$ colors for some $\varepsilon >0$. With their naive coloring procedure, the value of $\varepsilon$ that Molloy and Reed obtain is approximately $0.0238 \cdot \frac{1}{36} \approx 0.0007$. Bruhn and Joos~\cite{BJ} improved the bound on the neighborhood sparsity by showing that $L(G)^2$ is $1/4$-sparse. Combining this with an improved coloring procedure, they deduce that $\varepsilon \approx 0.0347$ suffices. Bonamy, Perrett and the second author~\cite{BPP} improved the bound even further by using an iterative version of the improved coloring procedure as well as an improved sparsity bound for critical graphs, proving $\chi'_s(G) \leq 1.835 \Delta^2$. 

Conjecture~\ref{conj:EN1} implies a bound on the clique number of $L(G)^2$. Since the tight example mentioned above is also a clique in $L(G)^2$, we have the following weaker conjecture by Faudree, Gy\'arf\'as, Schelp, and Tuza~\cite{FGHT2} from 1990 (see also~\cite{Bang}).

\begin{conj}\label{conj:EN2}
If $G$ is a graph, then $\omega(L(G)^2) \leq 1.25 \Delta(G)^2$.
\end{conj}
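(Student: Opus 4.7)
The plan is to aim directly for the conjectured bound $\omega(L(G)^2) \le \frac{5}{4}\Delta(G)^2$, which requires strengthening the $\sigma(G)^2/3$ estimate from the paper's main result by exploiting additional structure in the extremal regime $\sigma(G) \approx 2\Delta(G)$. Since the $\sigma^2/3$ bound yields exactly $\frac{4}{3}\Delta^2$ when $\sigma = 2\Delta$ (which happens whenever $G$ has an edge $uv$ with $d(u) = d(v) = \Delta$), any proof of the conjecture must introduce genuinely new structural ideas beyond Ore-degree counting.

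\textbf{Setup and decomposition.} Fix a maximum strong clique $F \subseteq E(G)$, viewed as a set of edges of $G$ pairwise at distance at most $1$ in $L(G)$. Pick a central edge $e_0 = u_0 v_0 \in F$ maximizing $d(u_0) + d(v_0)$, and write $d_1 = d(u_0) \ge d_2 = d(v_0)$. Every edge $e \in F$ has an endpoint in the set $S := N[u_0] \cup N[v_0]$. Partition $F$ into $F_1 := \{e \in F : u_0 \in e\}$, $F_2 := \{e \in F : v_0 \in e,\ u_0 \notin e\}$, and $F_3 := F \setminus (F_1 \cup F_2)$. We have $|F_1| \le d_1$ and $|F_2| \le d_2 - 1$, contributing at most $2\Delta - 1$ to $|F|$.

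\textbf{Bounding the external edges.} The core task is to show $|F_3| \le \frac{5}{4}\Delta^2 - (2\Delta - 1)$. Each $e \in F_3$ has at least one endpoint in $S' := S \setminus \{u_0, v_0\}$, and by the distance-$\le 1$ condition, any endpoint of $e$ outside $S$ must be reachable by an edge from every vertex hosting an edge of $F$; combined with the maximality of $d_1 + d_2$, this severely restricts the ``external'' endpoints. I would then set up a weighted double count over $S'$, using variables $f(w) = |\{e \in F_3 : w \in e\}|$ for $w \in S'$, and impose linear constraints coming from degree bounds $d(w) \le \Delta$ together with the pairwise distance-$\le 1$ conditions among edges of $F_3$ themselves. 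The natural partition of $S'$ into $N(u_0) \setminus N[v_0]$, $N(v_0) \setminus N[u_0]$, and $N(u_0) \cap N(v_0)$ would then feed into a refined optimization whose extremum should match the edge distribution in a $C_5$-blowup.

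\textbf{The main obstacle.} The sharp constant $\frac{5}{4}$ is witnessed only by blow-ups of $C_5$, and $C_5$-specificity is inherently global: a local argument around $e_0$ cannot, on its own, rule out hypothetical configurations imitating denser blow-ups of shorter cycles that would exceed $\frac{5}{4}\Delta^2$. Closing the gap likely requires either (i) a global optimization or spectral argument on the auxiliary ``incidence'' graph whose vertices are the slots in $S$ and whose constraints come from the distance-$\le 1$ relations, or (ii) a stability-type inductive argument that iteratively removes a near-$C_5$-blowup substructure from any extremal clique. Handling these global constraints is the principal difficulty, and explains why the gap between the present paper's $\frac{4}{3}\Delta^2$ and the conjectured $\frac{5}{4}\Delta^2$ has persisted for over three decades despite refinements such as the Ore-degree bound developed in this work.
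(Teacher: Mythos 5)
This statement is a conjecture, not a theorem: the paper does not prove it, and it remains open. The paper's contribution is the weaker bound $\omega(L(G)^2)\le \tfrac{1}{3}\sigma(G)^2\le \tfrac{4}{3}\Delta(G)^2$, obtained via the Ore-degree reduction (Theorem~\ref{reduction} together with Corollary~\ref{bip_H_sigma_H_2}), plus the observation that Conjecture~\ref{bip_H_sigma_H_conj} (also open) would imply the $\tfrac{5}{4}\Delta^2$ bound. So there is no ``paper's own proof'' to compare against.

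Your proposal is correspondingly not a proof. The decomposition in your setup is fine as far as it goes (it is close in spirit to the $x,y,A,B$ decomposition in the proof of Theorem~\ref{reduction}), but the entire content of the argument is deferred to the step ``show $|F_3|\le \tfrac54\Delta^2-(2\Delta-1)$,'' for which you supply only a description of constraints you \emph{would} impose, not an actual optimization with a verified extremum. You then candidly identify the obstruction yourself: a local count around one heavy edge cannot distinguish the $C_5$-blowup extremum from hypothetical denser configurations, and the global or stability-type argument needed to rule those out is named but not constructed. That missing global step is precisely where the conjecture lives; without it the proposal establishes nothing beyond what local counting already gives (bounds of the form $c\Delta^2$ with $c>\tfrac54$). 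If you want a concrete, provable target in this direction, the right move is the paper's own: prove a statement like Conjecture~\ref{bip_H_sigma_H_conj} (the $\tfrac14\sigma_G(H)^2$ bound when only $H$ is bipartite), which Theorem~\ref{reduction} shows would yield $\tfrac{5}{16}\sigma^2\le\tfrac54\Delta^2$.
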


Some progress on Conjecture~\ref{conj:EN2} has already been made. First, the case of bipartite graphs has been solved in a stronger form by Faugree et. al.~\cite{FGHT2} since 1990 as follows.

	\begin{theorem}	\label{bip_delta}
		If $G$ is a bipartite graph, then $\omega(L(G)^2)\leq \Delta(G)^2$.
	\end{theorem}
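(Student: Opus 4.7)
My plan is as follows. Let $M \subseteq E(G)$ be a clique in $L(G)^2$ and fix any edge $u_0v_0 \in M$ with $u_0 \in A, v_0 \in B$ (the bipartition of $G$). I would first establish the structural observation that for every $uv \in M$ (with $u \in A, v \in B$), either $u \in N(v_0)$ or $v \in N(u_0)$. Indeed, since $uv$ and $u_0v_0$ are at distance at most $2$ in $L(G)$, they either share a vertex or are connected by a third edge $xy \in E(G)$; in the bipartite setting this edge has $x \in A, y \in B$, and the adjacency requirement forces $xy \in \{u_0v, uv_0\}$, so $u \in N(v_0)$ or $v \in N(u_0)$ as claimed.

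With this in hand, the plan is to produce an injection $\phi \colon M \hookrightarrow N(v_0) \times N(u_0)$, which immediately yields $|M| \leq d(v_0) \cdot d(u_0) \leq \Delta(G)^2$. For each $e = uv \in M$, define the set of admissible targets
\[ P(e) = \bigl(N(v_0) \cap N(v)\bigr) \times \bigl(N(u_0) \cap N(u)\bigr) \subseteq N(v_0) \times N(u_0). \]
The structural observation guarantees $P(e) \neq \emptyset$: if $u \in N(v_0)$ then $(u, v_0) \in P(e)$, and if $v \in N(u_0)$ then $(u_0, v) \in P(e)$. I would apply Hall's marriage theorem to the bipartite auxiliary graph between $M$ and $N(v_0) \times N(u_0)$ with adjacencies given by $P(\cdot)$ to extract the desired injection.

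The main obstacle is verifying Hall's condition, $|\bigcup_{e \in S} P(e)| \geq |S|$ for every $S \subseteq M$. The dangerous subsets are those consisting of many edges $e = uv$ with $u \notin N(v_0)$ (or dually $v \notin N(u_0)$), for which $P(e) \subseteq \{u_0\} \times N(u_0)$. For such a set $S$, the plan is to invoke the strong clique condition between pairs of edges \emph{inside} $S$—not merely between each edge and $u_0v_0$—to produce enough extra transverse edges of $G$ that enrich $\bigcup_{e \in S} P(e)$ beyond $|S|$. Formalizing this enrichment is the heart of the proof; a crude double count using only the single reference edge $u_0v_0$ gives only $|M| \leq 2\Delta(G)^2$, so the full strong clique condition must be exploited to eliminate the factor of $2$.
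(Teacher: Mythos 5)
Your opening structural observation is correct: since $G$ is bipartite, any edge $uv\in M$ at $L(G)$-distance at most $2$ from $u_0v_0$ must satisfy $u\in N(v_0)$ or $v\in N(u_0)$. However, the proposed injection $\phi\colon M\hookrightarrow N(v_0)\times N(u_0)$ cannot exist for an \emph{arbitrary} choice of $u_0v_0\in M$, so the plan breaks down before Hall's condition can even be formulated correctly. A concrete counterexample: let $G$ be the path $u_0 v_0 u_1 v_1$ and $M = E(G)$. Then $M$ is a clique in $L(G)^2$ with $|M|=3$, while $d(u_0)=1$ and $d(v_0)=2$, so $|N(v_0)\times N(u_0)| = 2 < 3$; Hall's condition fails (indeed $\bigcup_e P(e) = \{(u_0,v_0),(u_1,v_0)\}$). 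To have any hope, $u_0v_0$ must be chosen with care, for instance with $u_0$ of maximum degree in $H$ and $v_0$ a neighbor of maximum degree among the neighbors of $u_0$ (even choosing $u_0$ of maximum degree and $v_0$ arbitrary fails in the same example when $v_0$ is taken to be the pendant end).

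Even granting a careful choice of base edge, your proposal leaves the verification of Hall's condition entirely open: the ``enrichment'' argument you sketch for the dangerous sets $S\subseteq M$ is not formalized, and you explicitly concede that it ``is the heart of the proof.'' As submitted, the argument yields only the crude bound $|M|\le 2\Delta(G)^2$ that you already acknowledge. The paper, by contrast, does not use an edge-based injection at all. It proves the stronger Theorem~\ref{bip_G_sigma_H}: fix a vertex $v$ of maximum degree in $H$, partition $V(G)$ by $G$-distance from $v$ into $A=N_H(v)$, $C=N_G(v)\setminus A$, and a set $S$ of distance-$2$ vertices that see distance-$3$ vertices in $H$; show every $u\in S$ is complete to $A$ in $G$; and then bound $|E_A|,|E_C|,|E_S|$ separately to obtain $|E(H)|\le\Delta(H)(\sigma_G(H)-\Delta(H))$, from which $\omega(L(G)^2)\le\Delta(G)^2$ follows. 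That vertex-centric counting argument avoids the matching/Hall machinery entirely and is the route you would want to follow.
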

		
Indeed, this bound is tight since the set of edges of a complete bipartite graph $K_{\Delta,\Delta}$ is a clique in $L(K_{\Delta,\Delta})^2$ of size $\Delta^2$. In a recent article, \'Sleszy\'nska-Nowak~\cite{Nowak} gave a new proof of theorem \ref{bip_delta}. Furthermore, she improved the bound for the general case as follows.

\begin{theorem}\label{gen_3_2_delta}
If $G$ is a graph, then $\omega(L(G)^2)\leq 1.5\Delta(G)^2$.
\end{theorem}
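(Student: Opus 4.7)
The plan is to let $S\subseteq E(G)$ be a clique in $L(G)^2$, so that every two edges of $S$ lie at distance $\le 2$ in $G$; write $\Delta := \Delta(G)$ and aim to show $|S|\le \tfrac32\Delta^2$. First I would reduce to a structural situation: if $S$ contains no two disjoint edges then $S$ is an intersecting edge family, hence a star or the edge set of a triangle, giving $|S|\le \Delta+1$; so I may assume $S$ contains two disjoint edges $e_1=x_1y_1$ and $e_2=x_2y_2$, and the strong clique condition then forces an edge of $G$ between their vertex sets, so say $x_1x_2\in E(G)$.

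The key observation would be a ``two-witness cover'': every $f\in S$ lies at distance $\le 2$ from each of $e_1$ and $e_2$, and therefore has some endpoint in $N[x_1]\cup N[y_1]$ and some endpoint in $N[x_2]\cup N[y_2]$. I would choose the pair $(e_1,e_2)$ extremally (for instance, maximising the number of $S$-edges incident to $\{x_1,y_1,x_2,y_2\}$), then split $S$ into three parts: (a) edges incident to $\{x_1,y_1,x_2,y_2\}$, contributing at most $4\Delta$; (b) ``cross'' edges with one endpoint in $(N[x_1]\cup N[y_1])\setminus (N[x_2]\cup N[y_2])$ and the other in $(N[x_2]\cup N[y_2])\setminus (N[x_1]\cup N[y_1])$; and (c) ``same-side'' edges with both endpoints in one of $N[x_i]\cup N[y_i]$ and none in $\{x_1,y_1,x_2,y_2\}$. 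For part (b) I would try to invoke Theorem~\ref{bip_delta}, embedding the cross edges into an auxiliary bipartite subgraph of $G$ of maximum degree $\le\Delta$ in which they still form an $L^2$-clique, thereby capping (b) at $\Delta^2$. Part (c) would be handled by a direct degree count using the extra constraint that each same-side edge must nonetheless stay within distance $2$ of the opposite witness.

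The main obstacle is that the naive bound on part (c) is $\tfrac12\Delta^2$ on each side, which combined with $\Delta^2$ from (b) overshoots the target by $\tfrac12\Delta^2$. The saving of $\tfrac12\Delta^2$ must come from using the distance-$\le 2$ constraint between same-side edges on opposite sides, together with the adjacency $x_1x_2\in E(G)$: geometrically, a same-side edge $f$ on the $e_1$-side forces the entire $e_2$-side to lie near one endpoint of $f$, which in turn restricts further same-side edges on the $e_2$-side. I anticipate that an exchange argument, or an averaging over the choice of witnesses $(e_1,e_2)$, closes this gap by showing that the two same-side contributions cannot simultaneously be large; this is where I expect the subtlest combinatorics of the proof to live.
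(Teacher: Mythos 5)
Theorem~\ref{gen_3_2_delta} is not proved in this paper; it is \'Sleszy\'nska-Nowak's result~\cite{Nowak}, quoted for context. Her argument, as the authors summarise in the introduction, anchors at a single vertex $v$ of maximum degree and counts via vertex covers the clique edges that avoid $N[v]$. The paper itself goes past $1.5\Delta^2$ to $\frac{4}{3}\Delta^2$ (Corollary~\ref{gen_4_3_delta}), via an Ore-degree induction (Theorems~\ref{bip_G_sigma_H} and~\ref{reduction}) that anchors at a single clique edge $xy$ and partitions $V(G)$ by $N(x)$ and $N(y)$. Your two-witness decomposition, anchored at a pair of \emph{disjoint} clique edges $e_1,e_2$, is a genuinely different scheme from both.

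That said, as written it is not a proof, and there are two concrete gaps. First, step~(b) cannot invoke Theorem~\ref{bip_delta} in the way you propose: the cross edges do form a bipartite subgraph of $G$, but they are a clique in $L(G)^2$, not in $L(G')^2$ for whatever bipartite auxiliary graph $G'$ you pass to. The distance-$\le 2$ witness between two cross edges may run through a same-side edge or through a vertex lying outside the two difference sets, and such witnesses disappear in $G'$; passing to a subgraph can only \emph{destroy} $L^2$-adjacencies, never create them, so the clique property is not inherited. This is precisely the obstruction the paper isolates as Conjecture~\ref{bip_H_sigma_H_conj}, which the authors explicitly state they cannot prove --- their Theorem~\ref{bip_G_sigma_H} requires the \emph{ambient} graph, not merely the subgraph carrying the clique, to be bipartite. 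So your step~(b) silently assumes an open conjecture. Second, you acknowledge that even granting (b), part~(c) overshoots by $\tfrac{1}{2}\Delta^2$, and the exchange or averaging argument meant to recover that loss is left as a hope rather than supplied. Both gaps would need to be closed, and the first one in particular is exactly where the paper's own machinery (the Ore-degree of $H$ measured inside $G$, not inside a bipartite subgraph) is doing the real work.
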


\subsection{Main Results}

In \cite{Nowak}, Theorem \ref{gen_3_2_delta} is proved by counting edges from a vertex $v$ of maximum degree. The crucial part of the argument is counting the number of edges which are not incident with any neighbor of $v$. This set is counted by means of vertex covers; however, the proof does not make use of the fact that this set of edges must also induce a clique in $L(G)^2$. If only induction could be applied to this set, then the bound might be improved; indeed, an unsophisticated inductive application of the result could be used to improve the bound to about $1.49\Delta(G)^2$. The trouble is that in this new graph the maximum degree might not decrease. Yet for any edge in the clique in $L(G)^2$, the sum of the degrees of its ends does decrease dramatically. This motivates then the use of Ore-degree, defined as follows, to more directly apply induction on said subgraph.

\begin{defn}[Ore-degree of a subgraph]
Let $G$ be a graph and $H$ be a subgraph of $G$. We define the \emph{Ore-degree} of $H$ in $G$ as $\sigma_G(H)=\max_{xy\in E(H)}(d_G(x)+d_G(y))$. Moreover, the \emph{Ore-degree} of $G$, which we denote by $\sigma(G)$, is defined as $\sigma_G(G)$.
\end{defn}

As will be seen later, Ore-degree is flexible enough to be used inductively in these kinds of proofs. Moreover, Ore-degree is perhaps the more natural parameter to bound $\omega(L(G)^2)$ since if $G$ is simple, $\sigma(G) = \Delta(L(G))+2$. In addition, there is also the natural question of whether analogues of the conjectures and theorems stated above hold for Ore-degree. For example, it is not immediate that the analogue of the trivial upper bound, that is $\omega(L(G)^2) \le .5 \sigma(G)^2$, is true; however, we found a short proof of this fact which we omit since this is implied by our stronger result Corollary~\ref{bip_H_sigma_H_2}.

Our first main result relates the clique number in the square of the line graph of a bipartite graph (even a multigraph) to Ore-degree as follows.

\begin{theorem}\label{bip_sigma_G}
If $G$ is a bipartite multigraph, then $\omega(L(G)^2) \leq \frac{1}{4}\sigma(G)^2$.
\end{theorem}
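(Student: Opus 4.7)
The plan is to prove the bound by induction on $|V(G)|$, exploiting the fact that deleting the two endpoints of a carefully chosen edge of $K$ decreases the relevant Ore-degree by $2$. Given a clique $K$ in $L(G)^2$ for a bipartite multigraph $G$ with parts $A$ and $B$, choose $e_0 = u_0 v_0 \in K$ (with $u_0 \in A$, $v_0 \in B$) so that $d_G(u_0) + d_G(v_0)$ is maximum over edges of $K$, and denote this maximum by $s \le \sigma(G)$. Since $G$ is bipartite, the distance-$2$ condition for $e_0$ simplifies nicely: every $xy \in K$ satisfies $x \in N_G(v_0)$ or $y \in N_G(u_0)$ (the would-be symmetric cases $x \in N_G(u_0)$ and $y \in N_G(v_0)$ are vacuous, since $x, u_0 \in A$ and $y, v_0 \in B$).

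Split $K = K_1 \sqcup K_2$, where $K_1$ is the set of edges of $K$ incident to $u_0$ or $v_0$, so that $|K_1| \le d_G(u_0) + d_G(v_0) - 1 = s - 1$. Let $G' := G - \{u_0, v_0\}$; then $K_2 \subseteq E(G')$. Moreover $K_2$ remains a clique in $L(G')^2$: an intermediate edge witnessing a distance-$2$ path between two edges of $K_2$ cannot use $u_0$ or $v_0$, since otherwise one of the two $K_2$-edges would share that vertex and hence be incident to $u_0$ or $v_0$. Applying the induction hypothesis to $K_2$ in $G'$ with the hoped-for bound $|K_2| \le (s-2)^2/4$ then gives
\[ |K| \le (s - 1) + (s - 2)^2/4 = s^2/4 \le \sigma(G)^2/4, \]
as desired.

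The hard part is establishing that the effective Ore-degree used in the recursive step is $\le s - 2$. For a \emph{doubly covered} edge $xy \in K_2$ with $x \in N_G(v_0)$ and $y \in N_G(u_0)$, both endpoints lose a degree in $G'$, giving $d_{G'}(x) + d_{G'}(y) = d_G(x) + d_G(y) - 2 \le s - 2$. The obstacle is the \emph{half-covered} case, where only one of the two inclusions holds, since then only one degree drops and $d_{G'}(x) + d_{G'}(y)$ is only bounded by $s - 1$. To overcome this I would strengthen the inductive statement using the generalized Ore-degree $\sigma_G(H)$ for a subgraph $H$, proving instead: for any subgraph $H$ of a bipartite multigraph $G$ and any clique $K \subseteq E(H)$ in $L(G)^2$, $|K| \le \sigma_G(H)^2/4$. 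In the inductive step one then applies the hypothesis with $H$ a subgraph of $G'$ chosen to exclude the problematic half-covered edges (for instance, restricting to edges whose endpoints both lie in $N_G(u_0) \cup N_G(v_0)$), using the extremality of $e_0$ and the distance-$2$ condition to ensure $K_2 \subseteq E(H)$. This is precisely where $\sigma_G(H)$ shines, since it can drop by $2$ relative to $s$ even when some edges of $G'$ itself retain a large degree sum.
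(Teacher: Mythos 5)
Your approach is a genuinely different one from the paper's: you attempt an induction on $|V(G)|$ by deleting the two endpoints of an extremal edge $e_0 = u_0v_0$ of $K$, while the paper proves the stronger Theorem~\ref{bip_G_sigma_H} via a direct, non-inductive counting argument (pick a vertex $v$ of maximum degree in $H$, partition $E(H)$ into edges meeting $N_H(v)$, edges meeting $N_G(v)\setminus N_H(v)$, and edges meeting the special distance-$2$ set $S$, and sum the three trivial bounds). So the comparison is apt and the numerology $(s-1)+(s-2)^2/4 = s^2/4$ is correct, but your inductive step has a genuine gap that your proposed fix does not close.

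The gap is precisely where you locate it, at the half-covered edges, and the fix you sketch fails. You propose restricting $H'$ to edges whose endpoints both lie in $N_G(u_0)\cup N_G(v_0)$ and claim the extremality of $e_0$ plus the clique condition forces $K_2\subseteq E(H')$. This is false. Take $A=\{u_0,x,a\}$, $B=\{v_0,y\}$, edges $u_0v_0,\,xv_0,\,av_0,\,xy,\,ay$ (i.e., $K_{3,2}$ minus the edge $u_0y$), and $K=\{u_0v_0,\,xy\}$. This $K$ is a clique in $L(G)^2$ since $u_0v_0$ and $xy$ are joined through $xv_0$. Degrees are $d(u_0)=1$, $d(v_0)=3$, $d(x)=2$, $d(y)=2$, so both edges of $K$ realize $s=4$; take $e_0=u_0v_0$. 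Then $K_2=\{xy\}$ with $y\notin N_G(u_0)\cup N_G(v_0)$, so your restricted $H'$ does not contain $xy$. Worse, any subgraph $H'$ of $G'=G-\{u_0,v_0\}$ with $K_2\subseteq E(H')$ satisfies $\sigma_{G'}(H')\ge d_{G'}(x)+d_{G'}(y)=1+2=3=s-1$, so there is \emph{no} admissible $H'$ with $\sigma_{G'}(H')\le s-2$, and the induction hypothesis cannot be invoked at strength $(s-2)^2/4$. Since the identity $(s-1)+(s-2)^2/4=s^2/4$ has no slack, replacing $(s-2)^2/4$ by $(s-1)^2/4$ ruins the bound. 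Choosing the other extremal edge $e_0=xy$ produces the symmetric half-covered edge $u_0v_0$ and the same obstruction. To rescue this route you would need a genuinely new idea, for instance trading off a smaller $|K_1|$ against the weaker Ore-degree drop when half-covered edges are present; as written the argument is incomplete.
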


Theorem~\ref{bip_sigma_G} implies Theorem~\ref{bip_delta} since $\sigma(G)\le 2\Delta(G)$ and indeed Theorem~\ref{bip_sigma_G} is tight for the complete bipartite graph.

In fact, we prove a stronger result wherein we use only the Ore-degree of the clique instead of the whole graph as follows.

\begin{theorem}\label{bip_G_sigma_H}
If $G$ is a bipartite multigraph and $H$ is a subgraph of $G$ such that $E(H)$ is a clique in $L(G)^2$, then $|E(H)| \leq \Delta(H)(\sigma_G(H)-\Delta(H)) \leq \frac{1}{4}\sigma_G(H)^2$.
\end{theorem}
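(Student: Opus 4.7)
The plan is to fix a vertex $v \in V(H)$ with $d_H(v) = \Delta := \Delta(H)$; since $G$ is bipartite with parts $A$ and $B$, we may assume $v \in A$. Set $D := N_H(v) \subseteq B$ (so $|D| = \Delta$) and $\sigma := \sigma_G(H)$. Because $vu \in E(H)$ for every $u \in D$, the Ore-degree bound immediately yields $d_G(u) \leq \sigma - d_G(v) \leq \sigma - \Delta$ for every $u \in D$. This will be the key degree inequality driving the count.

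The first substantial step is a structural dichotomy: \emph{for every edge $ab \in E(H)$ with $a \neq v$, either $b \in N_G(v)$ or $D \subseteq N_G(a)$.} This is extracted from the clique property in $L(G)^2$ applied to the pair $(vu, ab)$ for each $u \in D$. Since $G$ is bipartite, the only possible distance-at-most-$2$ witnesses are $b = u$, $vb \in E(G)$, or $au \in E(G)$; if $b \notin N_G(v)$ then the first two options fail for every $u \in D$ (as $D \subseteq N_G(v)$), forcing $au \in E(G)$ for all $u \in D$ simultaneously. In particular such an $a$ has $d_G(a) \geq \Delta$.

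Next I bound the set $E' := \{ab \in E(H) : b \in N_G(v)\}$ of edges whose $B$-endpoint lies in $N_G(v)$. We have $|E'| = \sum_{b \in N_G(v)} d_H(b)$; splitting at $D \subseteq N_G(v)$ and applying $d_H(b) \leq d_G(b) \leq \sigma - d_G(v)$ for $b \in D$ together with $d_H(b) \leq \Delta$ for $b \in N_G(v) \setminus D$ gives
\[
|E'| \;\leq\; \Delta\bigl(\sigma - d_G(v)\bigr) \;+\; \bigl(d_G(v) - \Delta\bigr)\Delta \;=\; \Delta(\sigma - \Delta).
\]
This already matches the target bound, which is very suggestive: the remaining edges $E'' := E(H) \setminus E'$ must be absorbed into the slack.

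The third and hardest step is handling $E''$. By the dichotomy, each $ab \in E''$ has $D \cup \{b\} \subseteq N_G(a)$ (since $b \notin N_G(v) \supseteq D$), so $d_G(a) \geq \Delta + 1$ and consequently $d_G(b) \leq \sigma - \Delta - 1$. The plan is a charging scheme: each edge of $E''$ exhibits an $A$-vertex $a$ with $D \subseteq N_G(a)$ and $d_G(a) \geq \Delta + 1$, which through the Ore-degree inequality applied to any $H$-edge at $a$ that lands in $D$ tightens the $E'$-estimate on that $u \in D$ from $\sigma - d_G(v)$ to $\sigma - \Delta - 1$, freeing up enough room in the $|E'|$ calculation to pay for $|E''|$. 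The main obstacle I anticipate is the sub-case where the $A$-endpoints of $E''$-edges have no $H$-edges back into $D$; there the charging to $D$ fails, and one must instead invoke the clique property among the edges of $E''$ themselves (noting that $E''$ is a clique in $L(G)^2$ with its $A$-endpoints all adjacent in $G$ to $D$) to bound $|E''|$ directly via a secondary application of the same dichotomy from the side of $B$.
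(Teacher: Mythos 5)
Your setup, your structural dichotomy, and your bound $|E'| \le \Delta(\sigma-\Delta)$ are all correct and closely parallel the paper's proof (which also fixes a max-degree vertex $v$, lets $D=N_H(v)$, and proves that every $A$-endpoint of a ``far'' edge is $G$-adjacent to all of $D$). The problem is that $\Delta(\sigma-\Delta)$ already saturates the target, so the moment $E''\ne\emptyset$ you have no slack, and your third step --- the one you flag as hard --- is where the proof lives. The charging scheme you describe (tightening $d_H(u)\le \sigma - d_G(v)$ to $\sigma-\Delta-1$ via an $H$-edge from $u$ into $T$) does not close: it presupposes such an $H$-edge exists, you explicitly note a sub-case where it doesn't, and even when it does the gain is at most one unit per vertex while $|E''|$ can be much larger. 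Moreover, the quantity $\sigma - d_G(v)$ need not exceed $\sigma-\Delta-1$, so the ``tightening'' can be vacuous when $d_G(v)>\Delta$. As written, the proposal is an incomplete sketch with an acknowledged unresolved case, not a proof.

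The missing idea, which is what the paper uses, is to split $\sum_{u\in D} d_H(u)$ by whether the $A$-endpoint lies in $T$ (the set of $A$-endpoints of $E''$-edges), rather than trying to improve the per-vertex bound. For $u\in D$, the $H$-edges from $u$ avoiding $T$ number at most $d_G(u)-|T|$, because $T\subseteq N_G(u)$ by your dichotomy; this gives $q := |\{ua\in E(H): u\in D,\ a\notin T\}| \le \Delta(\sigma-d_G(v)-|T|)$. Meanwhile, counting from the $T$-side, $p := |\{ua\in E(H): u\in D,\ a\in T\}|$ satisfies $p+|E''| \le \sum_{a\in T} d_H(a) \le |T|\Delta$. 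Adding $p$, $q$, and the contribution $(d_G(v)-\Delta)\Delta$ from $N_G(v)\setminus D$, the $|T|$-terms cancel exactly and $|E''|$ is absorbed, giving $|E(H)| \le \Delta(\sigma-\Delta)$ with no case analysis. This is precisely the paper's decomposition into $E_C$, $E_A$, and $E_S$ with the bound $|E_A|\le \Delta(\sigma-d_G(v)-|S|)$ and $|E_S|\le |S|\Delta$; the $|S|$-terms cancel. The cancellation is the whole proof, and your proposal does not contain it.
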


Theorem~\ref{bip_G_sigma_H} is more useful for inductive purposes then Theorem~\ref{bip_sigma_G} since we may only be able to control the Ore-degree of edges in $H$. Moreover, we conjecture that the same result holds when only $H$ is bipartite as follows.

\begin{conj}\label{bip_H_sigma_H_conj}
If $G$ is a graph, and $H$ is a bipartite subgraph of $G$ such that $E(H)$ is a clique in $L(G)^2$, then $|E(H)| \leq \frac{1}{4}\sigma_G(H)^2$.
\end{conj}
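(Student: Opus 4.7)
The plan is to try to extend the counting scheme that (presumably) proves Theorem~\ref{bip_G_sigma_H} to the setting where only $H$, not $G$, is bipartite. Let $H$ have bipartition $(A,B)$, and write $\Delta = \Delta(H)$, $\sigma = \sigma_G(H)$. Pick an edge $e=uv \in E(H)$ with $d_G(u)+d_G(v)=\sigma$, with $u\in A$ and $v\in B$. The first observation is the coarse bound: every edge of $H$ has an endpoint in $N:=N_G(u)\cup N_G(v)\cup\{u,v\}$, and $|N|\le \sigma$, so $|E(H)| \le \Delta \sigma$. This already establishes the conjecture whenever $\Delta \le \sigma/4$.

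For the regime $\Delta > \sigma/4$, I would take $w\in V(H)$ with $d_H(w)=\Delta$, say $w\in A$, and set $S = N_H(w) \subseteq B$. The Ore constraint forces $d_G(b) \le \sigma - \Delta$ for every $b \in S$. I would then split $E(H)$ according to whether the $B$-endpoint lies in $S$: the first part has size $\sum_{b\in S}d_H(b) \le \Delta(\sigma-\Delta)$. For the second part, every edge $xy$ with $y\in B\setminus S$ (so $x\ne w$) is forced, by the $L(G)^2$-clique property applied against each $w b_i$, to either have $\{x,y\}\cap N_G(w)\ne\emptyset$ (at most $(d_G(w)-\Delta)\Delta$ edges) or to satisfy $S\subseteq N_G(x)\cup N_G(y)$; the latter yields, by double-counting pairs $(b,xy)$ with $b\in S$ and $bx$ or $by$ in $E(G)$, a bound of $\Delta(\sigma-\Delta)$ (since each $b\in S$ touches at most $d_G(b)\cdot\Delta \le (\sigma-\Delta)\Delta$ such edges).

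A parallel line of attack I would pursue is to reduce to Theorem~\ref{bip_G_sigma_H} by constructing an auxiliary bipartite graph $G^*\supseteq H$ with $\sigma_{G^*}(H)\le \sigma$ in which $E(H)$ remains a clique in $L(G^*)^2$. The only edges of $G$ that violate bipartiteness with respect to $(A,B)$ are those inside $A$, those inside $B$, and those meeting $V(G)\setminus V(H)$; the key question is whether every ``bad'' witness of the distance-$2$ condition can be replaced by a bipartite one without inflating Ore-degree, ideally by exhibiting an alternative bipartite witness $ab'$ or $a'b$ already in $E(G)$ when $aa'\in E(G)$ serves as a witness between $ab$ and $a'b'$.

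The main obstacle is the tight regime $\Delta\approx \sigma/2$: the naive decomposition above totals roughly $2\Delta(\sigma-\Delta)+(d_G(w)-\Delta)\Delta$, which is only $\sigma^2/2$ in the worst case, losing a factor of two. Closing this factor-of-two gap is the heart of the problem and presumably requires exploiting an inclusion-exclusion between the $S$-part and the Sub-case~(ii) part, or a careful exchange argument showing that the Sub-case~(ii) edges cannot coexist in number with a dense star at $w$. Likewise, for the reduction approach, the sticking point is that bipartite witnesses may genuinely fail to exist for some pairs when $G$ contains many edges inside $A$ or $B$, so the construction of $G^*$ would have to be probabilistic or to exploit the global clique structure rather than being done pair by pair.
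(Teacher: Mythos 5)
You are attempting to prove a statement that the paper explicitly labels as an open conjecture. The authors write immediately after Conjecture~\ref{bip_H_sigma_H_conj}: ``While we cannot prove Conjecture~\ref{bip_H_sigma_H_conj}, we can prove that it implies Conjecture~\ref{conj:EN2}.'' So there is no paper proof to compare against, and no complete proof should be expected here.

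Your proposal is, correctly, not presented as a proof: you candidly identify where both of your lines of attack stall. The decomposition route stalls exactly where you say it does: the three pieces $\Delta(\sigma-\Delta)$ (edges into $S$), $(d_G(w)-\Delta)\Delta$ (edges through $N_G(w)\setminus S$), and $\Delta(\sigma-\Delta)$ (the double-counted remainder) sum to about $\sigma^2/2$ at $\Delta\approx\sigma/2$, a factor of two short, and the overlap between the first and third pieces is not obviously large enough to recover that factor. The reduction route stalls because a distance-$2$ witness between two clique edges may genuinely be an edge of $G$ internal to one side of $H$'s bipartition, and deleting or replacing it can destroy the clique property or inflate degrees. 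These are real obstacles, not oversights. What the paper does instead is prove the weaker constant $\frac{1}{3}$ (Corollary~\ref{bip_H_sigma_H_2}) by an induction through Theorem~\ref{reduction}: it decomposes $E(H)$ around a heaviest edge $xy$ into pieces handled by the trivial bound and two genuinely bipartite pieces $H_1,H_2$ (edges between $N_G(x)\setminus N_G(y)$ or $N_G(y)\setminus N_G(x)$ and the distance-$2$ shell) with reduced Ore-degree $\sigma-d(y)$, resp.\ $\sigma-d(x)$, to which the inductive hypothesis applies, and then averages the resulting inequalities and optimizes over $d(x),d(y)$. That argument lands at $\frac{1+a}{4}$ given an $a$-bound inductively, with fixed point $a=\frac{1}{3}$; pushing it to $\frac{1}{4}$ is exactly the open content of the conjecture. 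If you want to make progress, the place to look is whether the two bipartite pieces $H_1,H_2$ can be shown to be far from simultaneously extremal, or whether the $A_2$- and $E_A$-terms discarded in the paper's averaging step can be exploited.
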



While we cannot prove Conjecture~\ref{bip_H_sigma_H_conj}, we can prove that it implies Conjecture~\ref{conj:EN2} as follows.

\begin{theorem}\label{reduction}
		Let $G$ be a graph, $H$ a subgraph of $G$ such that $E(H)$ is a clique in $L(G)^2$, and $a\in \left[\frac 1 4;\frac 1 3\right]$.

		If the following assumption holds:
		\begin{assump}
			\label{ass}
			For all bipartite subgraphs $H'$ of $H$ such that $|E(H')|<|E(H)|$, we have $|E(H')|\leq a \cdot \sigma_{G[V(H')]}(H')^2$.
		\end{assump}
		Then
		$$|E(H)|\leq  \left(\frac{1+a}{4}\right)\sigma_G(H)^2.$$
\end{theorem}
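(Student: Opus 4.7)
The plan is to fix an edge $uv \in E(H)$ with $d_G(u)+d_G(v) = \sigma := \sigma_G(H)$ and to decompose $E(H)$ as a disjoint union of bipartite subgraphs of $H$, each of which satisfies the Assumption. Since $E(H)$ is a clique in $L(G)^2$, every edge of $H$ lies within $L(G)$-distance two of $uv$, so has an endpoint in $S := N_G[u]\cup N_G[v]$; in particular $|S| \leq \sigma$ and $S$ is a vertex cover of $H$.

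Concretely, I would take $H_1 := H[S, V(H)\setminus S]$ (the bipartite subgraph of $H$ consisting of edges crossing $(S, V(H)\setminus S)$, so $\sigma_{G[V(H_1)]}(H_1) \leq \sigma$) and a bipartite subgraph $H_2 \subseteq H[S]$ obtained by further bipartitioning $S$ along $N_G[u]\sqcup(N_G(v)\setminus N_G[u])$. The remaining within-$N_G[u]$ and within-$(N_G(v)\setminus N_G[u])$ edges are peeled off with yet another bipartition of those smaller sets. The central point is that the subsequent pieces live on vertex sets of size $\leq |S| \leq \sigma$ (and even smaller as we iterate), so their induced subgraphs in $G$ have max degree bounded by the set size, forcing the induced Ore-degrees $\sigma_i := \sigma_{G[V(H_i)]}(H_i)$ to shrink. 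Applying the Assumption to each $H_i$ (valid as long as $|E(H_i)| < |E(H)|$) gives $|E(H_i)| \leq a\sigma_i^2$, and the target bound reduces to the numerical inequality $\sum_i \sigma_i^2 \leq \frac{1+a}{4a}\sigma^2$.

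The main obstacle is verifying this numerical inequality uniformly across $a \in [1/4, 1/3]$. At $a = 1/4$ the right-hand side is $5\sigma^2$ and the natural two-piece split $(\sigma_1, \sigma_2) = (\sigma, \sigma/2)$ saturates the bound (realized by the $5$-cycle blow-up, where the inner piece is a $K_{k,k}$ on $2k = \sigma/2$ vertices). As $a$ approaches $1/3$ the right-hand side shrinks to $\sigma^2$, forcing the inner pieces to be close to trivial, so the proof must either refine the decomposition further (extracting bipartite pieces on progressively smaller vertex sets) or handle $H$ bipartite separately (where the Assumption on $H$ minus a single edge already almost gives the claim). The main technical work is engineering the bipartitions so that the induced Ore-degrees shrink in precisely the right proportion across the full interval, with particular care for the interplay between edges touching the common neighborhood $N_G(u)\cap N_G(v)$.
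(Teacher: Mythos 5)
Your high-level plan (cover $E(H)$ by bipartite pieces, apply the Assumption to each, sum the resulting bounds) is related in spirit to what the paper does, but the mechanism you propose for making the induced Ore-degrees shrink does not actually shrink them enough, and this is not a small numerical obstacle to be patched --- it is the crux of the proof.

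Specifically, your first piece $H_1 := H[S, V(H)\setminus S]$ only gets $\sigma_{G[V(H_1)]}(H_1)\leq\sigma$, i.e.\ \emph{no} shrinkage at all, so already $a\sigma_1^2 = a\sigma^2$, which exhausts the entire budget $\frac{1+a}{4}\sigma^2$ when $a=1/3$. Your subsequent pieces live inside $S$ with $|S|\leq\sigma$, but a vertex-set bound of size $\sigma$ again only gives Ore-degree $\lesssim\sigma$; there is no reason the Ore-degree of $G[S]$ restricted to a bipartition of $S$ should drop significantly below $\sigma$. The shrinkage you need does not come from cardinality of vertex sets at all. In the paper, after choosing $x$ of maximum $H$-degree and $y\in N_H(x)$ maximizing $d_H$, the bipartite piece $H_1$ on $(N_G(x)\setminus N_G(y))\times B$ (where $B$ is the set at $G$-distance $2$ from $\{x,y\}$) has $\sigma_{G_1}(H_1)\leq\sigma-d(y)$ \emph{because} for every $uv\in E(H_1)$ and every $w\in N_H(y)$, one of $u,v$ must be adjacent to $w$ (otherwise the edges $uv$ and $yw$ of $H$ are at $L(G)$-distance $\geq 3$). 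That is, the clique-in-$L(G)^2$ property forces $u$ and $v$ to jointly dominate $N_H(y)$, which eats $d(y)$ out of the Ore-degree. Your decomposition never invokes this and hence never gets the required headroom.

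Beyond the shrinkage issue, the paper's argument is \emph{not} a pure ``sum of $a\sigma_i^2$'' bound: edges touching $x$, $y$, or $N_G(x)\cap N_G(y)$, and edges inside $A$ are handled by direct degree counting rather than via the Assumption, yielding one ``trivial'' bound $|E(H)|\leq d(x)(\sigma-d(x)+d(y))$ and one ``Assumption-based'' bound; the conclusion comes from averaging the two and optimizing over $d(x),d(y)$ with a case split on whether $d(y)$ exceeds $(\sqrt{1+a}-1)\sigma$. Your proposed reduction to $\sum_i\sigma_i^2\leq\frac{1+a}{4a}\sigma^2$ would require all of $E(H)$ to be captured by bipartite pieces with strongly shrinking Ore-degrees, and you have not exhibited such a decomposition; your own text concedes that as $a\to 1/3$ the pieces must become ``close to trivial,'' which is precisely the signal that the decomposition as described cannot close the argument. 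So this is a genuine gap: the missing idea is the $\sigma-d(y)$ (resp.\ $\sigma-d(x)$) Ore-degree drop for edges far from $y$ (resp.\ $x$), obtained from the clique property, combined with a trivial-count/Assumption-count average rather than a pure decomposition sum.
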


Note than that if Assumption~\ref{ass} holds with $a=\frac{1}{4}$, then by Theorem~\ref{reduction}, $\omega(L(G)^2)\le \frac{5}{16} \sigma_G(H)^2 \le \frac{5}{4} \Delta(G)^2,$ and hence Conjecture~\ref{bip_H_sigma_H_conj} implies Conjecture~\ref{conj:EN2}. Moreover, Assumption~\ref{ass} is itself true inductively for $a=\frac{1}{3}$ as the following corollary demonstrates.

	\begin{cor}\label{bip_H_sigma_H_2}
		If $G$ is a graph and $H$ is a subgraph of $G$ such that $E(H)$ is a clique in $L(G)^2$, then $|E(H)|\leq \frac{1}{3} \sigma_G(H)^2 \le \frac{1}{3} \sigma(G)^2$.
	\end{cor}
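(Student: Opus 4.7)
The plan is to prove Corollary~\ref{bip_H_sigma_H_2} by straightforward induction on $|E(H)|$, feeding the inductive hypothesis into Theorem~\ref{reduction} with parameter $a=\tfrac13$. Observe that $\tfrac{1+a}{4}=\tfrac13$ exactly when $a=\tfrac13$, so Theorem~\ref{reduction} is perfectly calibrated to close the induction at this value: if Assumption~\ref{ass} holds with $a=\tfrac13$ for all smaller instances, it reproduces the bound $|E(H)|\le\tfrac13\sigma_G(H)^2$ for $H$ itself, which is precisely the conclusion we want.

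More concretely, I would induct on $|E(H)|$, with the trivial base case $|E(H)|\le 1$ (here $\sigma_G(H)\ge 2$, so the inequality reduces to $1\le 4/3$). For the inductive step, let $H'$ be any bipartite subgraph of $H$ with $|E(H')|<|E(H)|$. I claim that the triple $(G[V(H')],H')$ satisfies the hypothesis of the corollary: since $E(H')\subseteq E(H)$ is a clique in $L(G)^2$, any two edges of $H'$ either share an endpoint or are joined by an edge of $G$ between their endpoints, and every such connecting edge has both ends in $V(H')$, hence lies in $G[V(H')]$. Therefore $E(H')$ is a clique in $L(G[V(H')])^2$. By the inductive hypothesis applied to $(G[V(H')],H')$, we get
\[
|E(H')|\le \tfrac13\,\sigma_{G[V(H')]}(H')^2,
\]
which is exactly Assumption~\ref{ass} with $a=\tfrac13$. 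Now Theorem~\ref{reduction} yields $|E(H)|\le \tfrac{1+1/3}{4}\sigma_G(H)^2=\tfrac13\sigma_G(H)^2$, completing the induction. The final inequality $\sigma_G(H)\le\sigma(G)$ is immediate from the definition of Ore-degree, since the maximum over $E(H)$ is at most the maximum over $E(G)$.

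The only point that requires any care is the verification that the induction is actually applied to a valid instance when passing from $G$ to $G[V(H')]$; this is essentially a bookkeeping check that being a clique in $L(G)^2$ is inherited, not an extra obstruction. Since Theorem~\ref{reduction} does all the real work, there is no substantive new difficulty here, and the value $a=\tfrac13$ is precisely the fixed point of the map $a\mapsto\tfrac{1+a}{4}$, which explains why $\tfrac13$ is the natural constant obtainable by iterating Theorem~\ref{reduction} unconditionally.
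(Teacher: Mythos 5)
Your proof is correct and is essentially identical to the paper's: induction on $|E(H)|$, verifying that any bipartite $H'\subsetneq H$ gives a clique in $L(G[V(H')])^2$ so the inductive hypothesis provides exactly Assumption~\ref{ass} with $a=\tfrac13$, then closing via Theorem~\ref{reduction} using the fixed-point identity $\tfrac{1+1/3}{4}=\tfrac13$. No substantive differences.
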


	\begin{proof}
		We proceed by induction on $|E(H)|$. If $|E(H)|\le 1$, then the the result follows trivially. So we may assume $|E(H)|\ge 2$. Let $H'$ be a bipartite subgraph of $H$ such that $|E(H')|<|E(H)|$. Now $E(H')$ is also a clique in $L(G)^2$ since $E(H)$ is. But then $E(H')$ is a clique in $L(G[V(H')])^2$. So by induction, $|E(H')|\le \frac{1}{3} \sigma_{G[V(H')]}(H')^2$. Thus Assumption \ref{ass} of Theorem $\ref{reduction}$ holds for $H$ with $a=\frac{1}{3}$. But then by Theorem \ref{reduction} with $a=\frac{1}{3}$, $|E(H)| \leq \left(\frac{1+\frac{1}{3}}{4}\right) \sigma_G(H)^2= \frac{1}{3} \sigma_G(H)^2$, as desired.
	\end{proof}

Since, $\sigma(G)\le 2\Delta(G)$, we have the following progress towards Conjecture~\ref{conj:EN2}.

	\begin{cor}\label{gen_4_3_delta}
		If $G$ is a graph, then $\omega(L(G)^2)\leq \frac{4}{3} \Delta(G)^2$.
	\end{cor}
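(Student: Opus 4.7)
The plan is to derive Corollary~\ref{gen_4_3_delta} as an essentially immediate consequence of the preceding Corollary~\ref{bip_H_sigma_H_2}, so the main work has already been done. First, I would unpack the definitions: $\omega(L(G)^2)$ equals the largest number of edges in any subgraph $H$ of $G$ whose edge set is a clique in $L(G)^2$. Fix such an $H$ achieving the maximum; then by Corollary~\ref{bip_H_sigma_H_2} we immediately have $|E(H)| \leq \tfrac{1}{3}\sigma_G(H)^2 \leq \tfrac{1}{3}\sigma(G)^2$.

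The second step is to replace the Ore-degree bound by one in terms of $\Delta(G)$. Directly from the definition $\sigma(G)=\max_{uv\in E(G)}(d(u)+d(v))$, for any edge $uv$ of $G$ both $d(u)\leq\Delta(G)$ and $d(v)\leq\Delta(G)$, so $\sigma(G)\leq 2\Delta(G)$. Plugging this inequality in yields
\[
\omega(L(G)^2) = |E(H)| \leq \tfrac{1}{3}\sigma(G)^2 \leq \tfrac{1}{3}(2\Delta(G))^2 = \tfrac{4}{3}\Delta(G)^2,
\]
which is the desired conclusion. There is no real obstacle here: every nontrivial step has been absorbed into Theorem~\ref{reduction} and the induction used in Corollary~\ref{bip_H_sigma_H_2}. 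The only thing to be careful about is to state the trivial inequality $\sigma(G)\leq 2\Delta(G)$ explicitly, since the body of the paper mentions it in passing but it is exactly the bridge from the Ore-degree formulation back to the $\Delta$-formulation that Conjecture~\ref{conj:EN2} uses.
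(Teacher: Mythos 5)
Your proof is correct and matches the paper's reasoning exactly: the paper derives this corollary directly from Corollary~\ref{bip_H_sigma_H_2} together with the observation that $\sigma(G)\leq 2\Delta(G)$. Nothing to add.
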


In addition, we should mention that Reed~\cite{MR2} proved that $\chi_f(G)\le \left\lceil \frac{\Delta(G)+1+\omega(G)}{2} \right\rceil$ where $\chi_f(G)$ is the fractional chromatic number. Hence Corollary~\ref{gen_4_3_delta} implies that $\chi_f(L(G)^2) \le \frac{5}{3} \Delta(G)^2$ which is progress toward the fractional chromatic version of Conjecture~\ref{conj:EN1}.

Lastly, one may wonder if the results are tight. Corollary~\ref{gen_4_3_delta} is not if Conjecture~\ref{conj:EN2} is true. Yet when $G$ is bipartite, Theorem~\ref{bip_delta} is tight for the complete bipartite graph. Is this the only extremal example? Essentially yes as we prove the following stability version of Theorem~\ref{bip_delta}.

\begin{theorem}\label{stability}
For all $\varepsilon\in [0,1]$, if $G$ is a bipartite graph such that $\omega(L(G)^2)\geq (1-\varepsilon)\Delta(G)^2$, then $G$ contains a subgraph isomorphic to $K_{r,r}$ where $r=(1-\sqrt{8}\varepsilon^{1/4})\Delta(G)$.
\end{theorem}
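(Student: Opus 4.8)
The plan is to quantify the slack in the proof of Theorem~\ref{bip_delta}. Let $G$ be bipartite with parts $A,B$, let $\Delta = \Delta(G)$, and let $H$ be a subgraph of $G$ with $E(H)$ a clique in $L(G)^2$ and $|E(H)| \ge (1-\ep)\Delta^2$. Fix an edge $e_0 = a_0 b_0 \in E(H)$ with $a_0 \in A$, $b_0 \in B$. Since every edge of $H$ is within distance $2$ of $e_0$ in $L(G)$, every edge of $H$ meets $N_G(a_0) \cup N_G(b_0)$; in fact, because $G$ is bipartite, every edge of $H$ has one endpoint in $N_G(a_0) \subseteq B$ and the other in $N_G(b_0) \subseteq A$ (this is exactly the structure used to prove the $\Delta^2$ bound — the edges of $H$ form a bipartite "grid" between $N(b_0)$ on the $A$-side and $N(a_0)$ on the $B$-side). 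So $H \subseteq G[N_G(b_0), N_G(a_0)]$, a bipartite graph with at most $\Delta$ vertices on each side, and $|E(H)| \le \Delta^2$ with equality only if both neighborhoods have size exactly $\Delta$ and every possible edge between them is present. To get stability I first note that this near-complete bipartite graph on $\le \Delta \times \Delta$ vertices has $\ge (1-\ep)\Delta^2$ edges, hence few "missing" edges.

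The key step is a counting/extremal-graph argument: a bipartite graph on parts of size at most $\Delta$ each with at least $(1-\ep)\Delta^2$ edges must contain a large balanced complete bipartite subgraph $K_{r,r}$. I would proceed as follows. First, discard from each side the vertices of low degree: if a vertex in one part has degree at most $(1-\sqrt\ep)\Delta$, delete it; each such deletion removes at most $\Delta$ edges but "costs" at least $\sqrt\ep\,\Delta$ in the degree deficit, so we can delete at most $\sqrt\ep\,\Delta$ vertices from each side before exhausting the total edge deficit $\ep\Delta^2$. After this cleaning we have parts $A', B'$ of sizes in $[(1-\sqrt\ep)\Delta, \Delta]$ in which every vertex has degree at least $(1-\sqrt\ep)\Delta \ge (1-\sqrt\ep)|A'|$ (and symmetrically). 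Then apply a Kővári–Sós–Turán / greedy-common-neighborhood argument: in such a dense, min-degree-rich bipartite graph one finds a complete bipartite subgraph whose parts have size $r = (1-c\,\ep^{1/4})\Delta$ for an absolute constant $c$; tuning the constants (balancing the $\sqrt\ep$ vertex losses against the $\ep^{1/4}$-type loss from the clique-extraction step) yields $r = (1-\sqrt{8}\,\ep^{1/4})\Delta$.

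For the clique-extraction step concretely: with $m := |A'| \ge (1-\sqrt\ep)\Delta$ and every vertex of $B'$ missing at most $\sqrt\ep\,\Delta \le \frac{\sqrt\ep}{1-\sqrt\ep} m$ neighbors in $A'$, the expected number of $A'$-neighbors common to a random $k$-subset of $B'$ is at least $m(1 - \sqrt\ep\,\Delta/m)^k \ge m(1 - \tfrac{2\sqrt\ep\,\Delta}{m})$ for $k$ up to order $\sqrt{1/\sqrt\ep}$; choosing $k$ of size about $\ep^{-1/4}$ balanced against the number of still-full vertices gives a $K_{k,\,r}$ with $r \ge (1-O(\ep^{1/4}))\Delta$, and then symmetrising (or iterating the same argument inside the found $K_{k,r}$) upgrades $k$ to $r$ as well. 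I would then chase the constants to land exactly on $1-\sqrt8\,\ep^{1/4}$, noting the statement only needs a valid lower bound on $r$ (and is vacuous once $\sqrt8\,\ep^{1/4} \ge 1$, i.e. $\ep \ge 1/64$, so we may assume $\ep$ small).

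The main obstacle is the clique-extraction step: turning "few missing edges" plus "balanced min-degree" into an honest balanced $K_{r,r}$ with the claimed quantitative rate. The naive Kővári–Sós–Turán bound gives a complete bipartite subgraph but with one side much smaller than the other, and with a rate governed by logarithms rather than a clean power of $\ep$; the trick is to exploit that here the missing edges are so sparse (total deficit $\ep\Delta^2$, uniform-ish after cleaning) that a union bound over the $\binom{k}{2}$-many "bad coincidences" — or an averaging argument counting pairs $(S, v)$ with $S \in \binom{B'}{2}$ and $v \in A'$ non-adjacent to $S$ — forces a common neighborhood of size $(1-O(\sqrt\ep))\Delta$ already for pairs, and iterating a bounded (roughly $\log(1/\ep)$, or even $O(\ep^{-1/4})$) number of times keeps the loss geometric, finally matching $\sqrt8\,\ep^{1/4}$ after optimising how the per-side vertex losses $\sqrt\ep$ combine with the per-step edge losses. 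Getting the constant to be exactly $\sqrt8$ rather than merely $O(1)$ will require care in how the two sources of loss are balanced, but no new idea beyond bookkeeping.
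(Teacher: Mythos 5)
There is a genuine gap, and it is fatal to the proposed approach. Your clique-extraction step asserts that a bipartite graph with parts of size at most $\Delta$ and at least $(1-\ep)\Delta^2$ edges must contain $K_{r,r}$ with $r = (1-O(\ep^{1/4}))\Delta$. As a pure density statement this is false: if each of the $\Delta^2$ potential edges is deleted independently with probability $\ep$, the resulting graph has about $(1-\ep)\Delta^2$ edges, yet a first-moment calculation shows its largest balanced complete bipartite subgraph has parts of size only $O(\log \Delta/\ep)$, nowhere near $(1-o(1))\Delta$. No amount of cleaning low-degree vertices, K\H{o}v\'ari--S\'os--Tur\'an, or averaging over pairs can overcome this; the logarithmic barrier you acknowledge is not a bookkeeping issue but an actual obstruction. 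The missing idea is that the hypothesis that $E(H)$ is a clique in $L(G)^2$ must be used \emph{again} after the reduction to a dense grid. The paper does this (Lemma~\ref{stability2}) by passing to the bipartite complement $G'$ of $G$ on the two $\le \Delta$-sized parts: if $a_1b_1$ and $a_2b_2$ are two disjoint non-edges of $G$, then $a_1b_2$ and $a_2b_1$ are at distance at least $3$ in $G$, so at least one of them is missing from $H$; hence a matching of size $m$ in $G'$ forces $\Omega(m^2)$ missing edges of $H$, giving $m\le\sqrt{2\alpha}\,n$, and by K\H{o}nig's theorem the non-edges are covered by $m$ vertices whose deletion leaves an honest $K_{r,r}$. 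This structural step is what converts ``few missing edges'' into a linear-sized complete bipartite subgraph, and it has no analogue in your argument.

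A secondary (patchable) issue: your opening claim that every edge of $H$ has one endpoint in $N_G(a_0)$ and the other in $N_G(b_0)$ is not correct. Adjacency of $uv$ to $a_0b_0$ in $L(G)^2$ only requires \emph{some} edge of $G$ between $\{u,v\}$ and $\{a_0,b_0\}$, so an edge $uv\in E(H)$ may have $u\in N_G(b_0)$ while $v$ is at distance $3$ from $b_0$ (this is exactly the set $E_S$ in the proof of Theorem~\ref{bip_G_sigma_H}). The paper's Lemma~\ref{stability1} handles this by choosing maximum-degree vertices and explicitly bounding the edges that escape the grid, at the cost of an additive $(2\ep+2\sqrt\ep)\Delta^2$ loss; your write-up would need the same correction even before reaching the main gap.
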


The paper is organized as follows. In Section~\ref{sec:bip}, we prove Theorem~\ref{bip_G_sigma_H}. In Section~\ref{sec:gen}, we prove Theorem~\ref{reduction}. Finally in Section~\ref{sec:stab}, we prove Theorem~\ref{stability}.

\section{Proof of the Bipartite Result}\label{sec:bip}

		\begin{proof}[Proof of Theorem~\ref{bip_G_sigma_H}.]

We may assume $E(H)\ne\emptyset$ as otherwise the result follows trivially. For ease of reading, let us denote $\Delta(H)$ by $\Delta_H$ and $\sigma_G(H)$ by $\sigma$.	Let $v$ be a vertex of maximum degree in $H$, that is $d_H(v)=\Delta_H$. 

We may assume without loss of generality that $V(G)=V(H)$ since $E(H)$ is a clique in $L(G[V(H)])^2$. Moreover, since $E(H)$ is a clique in $L(G)^2$, all edges of $H$ must be at distance at most 2 in $G$ of each edge in $H$ incident to $v$. Thus each vertex in $H$ must be at distance at most 3 in $G$ of $v$. Since $V(G)=V(H)$, we have that every vertex in $G$ is at distance at most 3 in $G$ of $v$.

Let $A=N_H(v)$, $C=N_G(v)\setminus A$ and finally let $S=\{u\in V(H): d_G(u,v)=2, \exists uw\in E(H), d_G(v,w)=3\}$. Let $E_C$ be the set of edges of $H$ incident with a vertex of $C$, $E_S$ be the set of edges of $H$ incident with a vertex of $S$ and finally let $E_A$ be the set of edges of $H$ incident with a vertex of $A$ but not a vertex in $S$ (see Figure~\ref{schema_proof_bip_G_sigma_H} for an illustration). 

Note that every edge $e$ of $H$ that is not incident with a vertex in $N_G(v)$ must be incident with a vertex $u$ such that $d_G(u,v)=2$ since $E(H)$ is a clique in $L(G)^2$. Since $G$ is bipartite, the other end, call it $w$, of such an edge must be odd distance from $v$ and hence $d_G(v,w)=3$. But then $u\in S$ by definition. That is, every edge of $H$ is incident with a vertex in $A$, $C$ or $S$. Thus $E(H)= E_C\cup E_S\cup E_A$.
 
			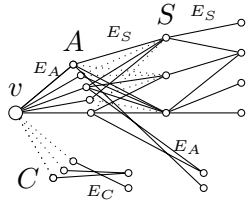
\begin{figure}[h]
				\begin{center}

					\begin{tikzpicture}[node distance=2cm, on grid, auto]
						\node[vertex, label=$v$] (A) {};

						\draw[thin] (A) -- (5*10-30+20:1cm) node[vertex, scale=0.5, label=$A$] (A-5) {}   node[midway, above] {\tiny$E_A$};
						\foreach \x in {1,...,5}
						{
							\draw[thin] (A) -- (\x*10-30+20:1cm) node[vertex, scale=0.5] (A-\x) {};
						}
						\draw[thin, dotted] (A) -- ++ (1*10-30-40:1cm) node[vertex, scale=0.5, solid, label=left:$C$] (C-1) {};
						\foreach \x in {1,...,3}
						{
							\draw[thin, dotted] (A) -- ++ (\x*10-30-40:1cm) node[vertex, scale=0.5, solid] (C-\x) {};
						}

						\node[vertex, label=above:$S$, scale=0.5] (S-1) at (2,1) {};
						\node[vertex, scale=0.5] (S-2) at (2,0.5) {};
						\node[vertex, scale=0.5] (S-3) at (2,0) {};

						\draw[thin, dotted] (A-1) -- (S-1);
						\draw[thin]	(A-1) -- (S-2);
						\draw[thin]	(A-1) -- (S-3);

						\draw[thin] (A-2) -- (S-1);
						\draw[thin, dotted]	(A-2) -- (S-2);
						\draw[thin, dotted]	(A-2) -- (S-3);

						\draw[thin] (A-3) -- (S-1);
						\draw[thin]	(A-3) -- (S-2);
						\draw[thin]	(A-3) -- (S-3);

						\draw[thin, dotted] (A-4) -- (S-1);
						\draw[thin, dotted]	(A-4) -- (S-2);
						\draw[thin]	(A-4) -- (S-3);

						\draw[thin] (A-5) -- (S-1) node[midway, above] {\tiny$E_S$};
						\draw[thin, dotted]	(A-5) -- (S-2);
						\draw[thin]	(A-5) -- (S-3);

						\node[vertex, scale=0.5] (D-1) at (3,1.2) {};
						\node[vertex, scale=0.5] (D-2) at (3,0.8) {};
						\node[vertex, scale=0.5] (D-3) at (3,0.4) {};
						\node[vertex, scale=0.5] (D-4) at (3,0) {};
						\draw[thin]
							(S-1) -- (D-1) node[midway, above] {\tiny$E_S$}
							(S-1) -- (D-2)
							(S-2) -- (D-2)
							(S-3) -- (D-2)
							(S-3) -- (D-3)
							(S-3) -- (D-4);

						\node[vertex, scale=0.5] (B-1) at (2.5,-0.8) {};
						\node[vertex, scale=0.5] (B-2) at (2.5,-1) {};

						\draw[thin]
							(A-1) -- (B-1) node[very near end, above] {\tiny$E_A$}
							(A-3) -- (B-1)
							(A-5) -- (B-2);

						\node[vertex, scale=0.5] (C'-1) at (1.5,-0.8) {};
						\node[vertex, scale=0.5] (C'-2) at (1.5,-1) {};

						\draw[thin]
							(C-1) -- (C'-1)
							(C-2) -- (C'-1)
							(C-3) -- (C'-2) node[midway, below] {\tiny$E_C$};

					\end{tikzpicture}
					\caption{Diagram of notations from the proof of theorem \ref{bip_G_sigma_H}.}
					\label{schema_proof_bip_G_sigma_H}
				\end{center}
			\end{figure}

			We claim that every vertex $u\in S$ is adjacent to all vertices in $A$. To see this recall that by definition of $S$, there exists $uw\in E(H), d_G(v,w)=3$. But for all $x\in A$, $d_G(vx,uw)=2$ and hence $xu\in E(G)$ as claimed since none of the other edges $vu,vw,xw$ exist in $G$ given the distances of $x$, $u$, and $w$ to $v$ in $G$.

			Now $|E_C| \le |C|\Delta_H = (d_G(v)-|A|)\Delta_H$ and $|E_S|\le |S|\Delta_H$. Meanwhile, for every $x\in A$, there are at most $d_G(x)-|S|$ edges in $E_A$ that are incident to $x$ since every vertex in $A$ is adjacent to all vertices in $S$ and such edges are not in $E_A$. Thus $|E_A| \le \sum_{x\in A} (d_G(x)-|S|)$. Yet for all $x\in A$, $d_G(x)+d_G(v) \le \sigma$. Hence $|E_A| \le |A|(\sigma-d_G(v)-|S|)$.
 
			So we have

			$$\begin{aligned}|E(H)|&\leq |E_C| + |E_A| + |E_S|\\
					&\leq (d_G(v)-|A|)\Delta_H + |A|(\sigma - d_G(v) -|S|) + |S|\Delta_H.\end{aligned}$$

			However, $|A|=\Delta_H$ and hence
			
			$$|E(H)|\leq \Delta_H(\sigma -\Delta_H) \leq \left(\frac {\sigma} {2}\right)^2.$$
		\end{proof}

\section{Proof of the General Result}\label{sec:gen}

To prove Theorem~\ref{reduction}, we decompose the graph $H$ into several sets of edges and count such sets in two different ways, one involving Assumption~\ref{ass} and the other involving trivial bounds. 

	\begin{proof}[Proof of Theorem~\ref{reduction}.]

We may assume without loss of generality that $V(G)=V(H)$ since $E(H)$ is also a clique in $L(G[V(H)])^2$. Furthermore, we may assume $E(H)\ne \emptyset$ as otherwise the result follows trivially. For ease of reading, we let $d(v)$ denote the degree in $H$ of a vertex $v$, that is $d_H(v)$, and we also let $\sigma$ denote $\sigma_G(H)$. 


Let $x$ be a vertex of maximum degree in $H$ and let $y$ be a neighbor in $H$ of $x$ such that $d_H(y)$ is maximized. 

Note also that since $xy\in E(H)$, every vertex of $H$ (and hence also of $G$) is at distance at most two in $G$ from at least one of $x$ or $y$. Thus we let $A=N_G(x)\cup N_G(y) \setminus\{x,y\}$, that is $A=\{z\in V(H): d_G(z,\{x,y\})=1\}$ and we let $B=\{z\in V(H): d_G(z,\{x,y\})=2\}$. So $V(G)=\{x,y\}\cup A\cup B$. 

Let us further partition $A$ as follows. Let $A_1=N_G(x)\setminus N_G(y)$, $A_2=N_G(x)\cap N_G(y)$ and $A_3=N_G(y)\setminus N_G(x)$, see Figure \ref{schema_proof_reduction} for an illustration. 

		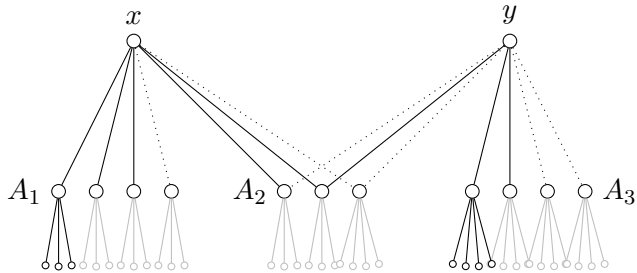
\begin{figure}[h]
			\begin{center}
					\begin{tikzpicture}[node distance=2cm, on grid, auto]
						\node[vertex, label=$x$] (x) {};
						\node[vertex, label=$y$] (y) at (5,0) {};

						\node[vertex, label=left:$A_1$] (A1-1) at (-1,-2) {};
						\node[vertex] (A1-2) at (-0.5,-2) {};
						\node[vertex] (A1-3) at (0,-2) {};
						\node[vertex] (A1-4) at (0.5,-2) {};

						\node[vertex, label=left:$A_2$] (A2-1) at (2,-2) {};
						\node[vertex] (A2-2) at (2.5,-2) {};
						\node[vertex] (A2-3) at (3,-2) {};

						\node[vertex] (A3-1) at (4.5,-2) {};
						\node[vertex] (A3-2) at (5,-2) {};
						\node[vertex] (A3-3) at (5.5,-2) {};
						\node[vertex, label=right:$A_3$] (A3-4) at (6,-2) {};

						\draw[thin] (x) -- (A1-1);
						\draw[thin] (x) -- (A1-2);
						\draw[thin] (x) -- (A1-3);
						\draw[thin, dotted] (x) -- (A1-4);
						\draw[thin] (x) -- (A2-1);
						\draw[thin] (x) -- (A2-2);
						\draw[thin, dotted] (x) -- (A2-3);

						\draw[thin, dotted] (y) -- (A2-1);
						\draw[thin] (y) -- (A2-2);
						\draw[thin, dotted] (y) -- (A2-3);
						\draw[thin] (y) -- (A3-1);
						\draw[thin] (y) -- (A3-2);
						\draw[thin, dotted] (y) -- (A3-3);
						\draw[thin, dotted] (y) -- (A3-4);

						\foreach \x in {1,...,3}
						{
							\draw[thin] (A1-1) -- ++ (\x*10-20-90:1cm) node[vertex, scale=0.5] () {};
							\draw[thin, draw=gray!50] (A1-2) -- ++ (\x*10-20-90:1cm) node[vertex, scale=0.5] () {};
							\draw[thin, draw=gray!50] (A1-3) -- ++ (\x*10-20-90:1cm) node[vertex, scale=0.5] () {};
							\draw[thin, draw=gray!50] (A1-4) -- ++ (\x*10-20-90:1cm) node[vertex, scale=0.5] () {};

							\draw[thin, draw=gray!50] (A2-1) -- ++ (\x*10-20-90:1cm) node[vertex, scale=0.5] () {};
							\draw[thin, draw=gray!50] (A2-2) -- ++ (\x*10-20-90:1cm) node[vertex, scale=0.5] () {};
						}

						\foreach \x in {1,...,4}
						{
							\draw[thin, draw=gray!50] (A2-3) -- ++ (\x*10-25-90:1cm) node[vertex, scale=0.5] () {};
							\draw[thin] (A3-1) -- ++ (\x*10-25-90:1cm) node[vertex, scale=0.5] () {};
							\draw[thin, draw=gray!50] (A3-2) -- ++ (\x*10-25-90:1cm) node[vertex, scale=0.5] () {};
							\draw[thin, draw=gray!50] (A3-3) -- ++ (\x*10-25-90:1cm) node[vertex, scale=0.5] () {};
							\draw[thin, draw=gray!50] (A3-4) -- ++ (\x*10-25-90:1cm) node[vertex, scale=0.5] () {};
						}

					\end{tikzpicture}
				\caption{Diagram of notations from the proof of Theorem \ref{reduction}.}
				\label{schema_proof_reduction}
			\end{center}
		\end{figure}

Finally it will be convenient to consider the neighbors in $H$ of $x$, so let $C=N_H(x)\setminus \{y\}$. Note that $C\subseteq A_1\cup A_2$. Moreover, for every vertex $v\in C$, $d(v)\le \sigma-d(x)$ and yet also $d(v)\le d(y)$ since $d(y)$ is maximized over $N_H(x)$.

Let $E_A$ be the set of edges with both ends in $A$. Note that every edge of $H$ other than $xy$ is incident with a vertex in $A$. 
Hence $|E(H)| = 1+\sum_{v\in A} d(v) - |E_A|$. Yet $\sum_{v\in C} d(v) \le |C| \cdot d(y)$ while $\sum_{v\in A\setminus C} d(v) \le |A\setminus C|\cdot d(x)$. Thus we have the following trivial bound on $|E(H)|$: 

		$$|E(H)|\leq 1 + |C|\cdot d(y) + |A\setminus C|\cdot d(x) - |E_A|.$$
		
Yet $|C|=d(x)$ and $|A|\le \sigma-|A_2|-2$. Thus we have: 

$$\begin{aligned}|E(H)|\leq& 1 + d(x) \cdot d(y) + (\sigma-|A_2|-2-d(x)) d(x) -|E_A|\\
			\leq& 1 -2d(x) + d(x)\sigma + d(x)d(y)-d(x)^2 -|A_2| d(x)-|E_A|.
		\end{aligned}$$

		We deduce the following simpler bound from the bound above:
		$$|E(H)|\leq d(x)(\sigma-d(x) + d(y)).$$

Now let us define two bipartite subgraphs of $H$ as follows: let $H_1$ be the graph such that $V(H_1)=A_1\cup B$ and $E(H_1)=\{uv\in E(H): u\in A_1, v\in B\}$; let $H_2$ be the graph such that $V(H_2)=A_3\cup B$ and $E(H_2)=\{uv\in E(H): u\in A_3, v\in B\}$. Since $E(H)$ is a clique in $L(G)^2$, so are $E(H_1)$ and $E(H_2)$. Furthermore, $E(H_1)$ is a clique in $L(G_1)^2$ and $E(H_2)$ is a clique in $L(G_2)^2$, where we let $G_1=G[V(H_1)]$ and $G_2=G[V(H_2)]$. 

In addition, for every edge $uv\in E(H_1)$ and $w\in N_H(y)$, either $u$ or $v$ must be adjacent to $w$. This implies that if $uv\in E(H_1)$, then $d_{G_1}(u) + d_{G_1}(v) \leq \sigma - d(y)$. So $\sigma_{G_1}(H_1)\leq \sigma - d(y)$. Thus by Assumption \ref{ass} applied to $H_1$, we find that 

$$|E(H_1)| \leq a (\sigma - d(y))^2.$$ 

Similarly $\sigma_{G_2}(H_2)\leq \sigma - d(x)$, and by Assumption \ref{ass} applied to $H_2$, we have 

$$|E(H_2)| \leq a (\sigma - d(x))^2.$$

Now every edge of $H$ is either incident to one of $x$, $y$ or a vertex of $A_2$, or is in one of $E(H_1), E(H_2)$ or $E_A$. Thus we get a new bound as follows:
$$|E(H)|\leq d(x)+d(y)-1 + |E_A| + |E(H_1)|+|E(H_2)| + |A_2|\cdot d(x).$$

Substituting the bounds for $|E(H_1)|$ and $|E(H_2)|$ from above now gives:
$$|E(H)| \leq d(x)+d(y)-1 + |E_A| + a (\sigma - d(y))^2 + a (\sigma - d(x))^2 + |A_2| \cdot d(x).$$
		
Then, the sum of the bound above and our first trivial bound is also a bound as follows:
$$2|E(H)| \leq d(y)-d(x) + (1-2a)d(x)\sigma -2a d(y)\sigma - (1-a) d(x)^2 + d(x)d(y) + 2a\sigma^2 + a d(y)^2.$$

Factoring out $d(x)$ and recalling that $d(y)\le d(x)$ gives
$$|E(H)|\leq \frac{1} {2} \left( d(x)\left( (1-2a) \sigma + d(y) - (1-a) d(x)\right) -2a d(y)\sigma + 2a\sigma^2 + a d(y)^2 \right).$$


		Recall that $a\in \left[\frac 1 4 ; \frac 1 3\right]$. Now we have two bounds, a simple one and an average one as follows:

		\begin{equation}
			|E(H)|\leq d(x)(\sigma-d(x) + d(y)).
			\label{simple}
		\end{equation}

		\begin{equation}
			|E(H)| \leq \frac{1} {2} \left( d(x)\left( (1-2a) \sigma + d(y) - (1-a) d(x) \right) -2a \cdot d(y)\sigma + 2a\sigma^2 + a d(y)^2 \right).
			\label{average}
		\end{equation}

		Next we set $s=\sqrt{1+a}-1$.
		We now distinguish two cases, depending on whether $\frac{d(y)}{\sigma}$ is more or less than $s$.\\

		\noindent{\bf Case 1: $d(y)\leq s \sigma.$} 
		
		Then, by (\ref{simple}), $|E(H)| \leq d(x)(\sigma-d(x) + d(y))$. Since $d(y)\le s\sigma$, we have that $$|E(H)|\le d(x)((1+s)\sigma-d(x)),$$ 
		which is at most $$(\frac{1+s}{2}\sigma)^2 = \frac{1+a}{4}\sigma,$$ as desired.\\
				
			\noindent{\bf Case 2: $d(y) \geq s \sigma$.}

				By (\ref{average}),

				$\begin{aligned}
					|E(H)| \leq& \frac{1} {2} \left( d(x)( (1-2a) \sigma + d(y) - (1-a) d(x)) -2a d(y)\sigma + 2a\sigma^2 + a d(y)^2 \right)\\
						\leq& \frac{1} {2} \left( (1-a) d(x)\left( \frac{(1-2a) \sigma + d(y)} {1-a} - d(x)\right) -2a d(y)\sigma + 2a\sigma^2 + a d(y)^2 \right).\\
				\end{aligned}$

				We would like to say that the right side of the inequality is maximized when $d(x)=\frac{(1-2a) \sigma + d(y)} {2(1-a)}$ but we should first distinguish whether or not $\frac{(1-2a) \sigma + d(y)} {2(1-a)}$ is greater than $\sigma - d(y)$ which is an upper bound on $d(x)$.\\

			\noindent{ \bf Case 2.1: $\frac{(1-2a) \sigma + d(y)} {2(1-a)} \leq \sigma-d(y)$, that is $d(y)\leq \frac{\sigma}{3-2a}$.}
			
				Then (\ref{average}) is maximized when $d(x)=\frac{(1-2a) \sigma + d(y)} {2(1-a)}$, whence we get

				$\begin{aligned}
					|E(H)| &\leq \frac{1} {2} \left( (1-a) \left( \frac{(1-2a) \sigma + d(y)} {2(1-a)}\right)^2 -2a d(y)\sigma + 2a\sigma^2 + a d(y)^2 \right)\\
						&\leq \frac{1} {8(1-a)} \left( (1 +4a - 4 a^2)\sigma^2 + (2-12a+8a^2) d(y)\sigma + (1+4a-4a^2) d(y)^2\right).\\
				\end{aligned}$

				Let 
				
				$$\begin{aligned} f(t) &= \frac{1} {8(1-a)} \left( (1 +4a - 4 a^2)\sigma^2 + (2-12a+8a^2) t\sigma + (1+4a-4a^2) t^2\right)\\
															 &= \frac{1}{8(1-a)} \left( (1+4a-4a^2)(\sigma+t)^2 + 4(4a^2-5a)t\sigma \right).\end{aligned}$$
															
			  Then $f$ is a second-degree polynomial, whose leading coefficient is $\frac{1+4a-4a^2}{8(1-a)}$. But, as $0 < a \le 1$, $\frac{1+4a-4a^2}{8(1-a)}\geq \frac{1}{8(1-a)} > 0$. So $f$ is a convex function. Hence

				$$\max_{t\in\left[s\sigma,\frac{\sigma}{3-2a}\right]} \left(f(t)\right) =\max\left(f(s\sigma),f\left(\frac{\sigma}{3-2a}\right)\right).$$\\
								
				\begin{claim}\label{First} 
				$f(s\sigma) \le \frac{1+a}{4}\sigma^2$.
				\end{claim}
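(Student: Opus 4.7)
The plan is to substitute $t = s\sigma$ directly into the formula for $f$, use the defining relation $(1+s)^2 = 1+a$ coming from $s = \sqrt{1+a}-1$, and reduce the claim to a one-variable polynomial inequality in $a$ on $[1/4, 1/3]$.

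Starting from the factored form
$$f(t) = \frac{1}{8(1-a)}\left((1+4a-4a^2)(\sigma+t)^2 + 4(4a^2-5a)t\sigma\right),$$
setting $t = s\sigma$ and using $(\sigma+s\sigma)^2 = (1+s)^2\sigma^2 = (1+a)\sigma^2$ gives
$$f(s\sigma) = \frac{\sigma^2}{8(1-a)}\left((1+4a-4a^2)(1+a) - 4a(5-4a)s\right).$$
Rewriting the target as $\frac{1+a}{4}\sigma^2 = \frac{2(1-a)(1+a)}{8(1-a)}\sigma^2$, after multiplying through by $8(1-a)/\sigma^2 > 0$ and rearranging, the claim reduces to the inequality
$$4a(5-4a)\,s \ge (1+a)(6a-4a^2-1).$$
On $a \in [1/4, 1/3]$ both sides are positive: clearly $5-4a > 0$, and the roots of $6a - 4a^2 - 1$ are $(3 \pm \sqrt{5})/4 \approx 0.19,\,1.31$, so $6a - 4a^2 - 1 > 0$ throughout the interval. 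I would then substitute $s = \sqrt{1+a}-1$, isolate the square root, and square both sides (valid since both are positive) to obtain an explicit polynomial inequality in $a$ alone.

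The main obstacle is verifying this resulting polynomial inequality rigorously on $[1/4, 1/3]$. A quick numerical check at the endpoints shows that it is comfortable at $a = 1/4$ but very tight at $a = 1/3$ (which is natural, since Corollary~\ref{bip_H_sigma_H_2} applies the bound exactly at $a = 1/3$). Consequently, loose estimates will not suffice; one must either express the polynomial difference in manifestly non-negative factored form or analyse its derivative to ensure monotonicity on the short interval $[1/4, 1/3]$. A clean alternative is to apply the elementary lower bound $\sqrt{1+a} \ge 1 + a/2 - a^2/8$ (equivalent, by squaring, to $8 \ge a$, so trivially true), which yields $s \ge a/2 - a^2/8$ and transforms the inequality into a degree-four polynomial inequality tight at $a = 1/3$ that is easy to verify directly.
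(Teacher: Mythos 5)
Your reduction is correct, and the ``clean alternative'' you propose---substitute $t=s\sigma$, use $(1+s)^2=1+a$, and replace $s$ by the lower bound $a/2-a^2/8$ (the direction of the estimate is consistent because $4a^2-5a\le 0$ on the interval)---is precisely the route the paper takes. The one place you wave your hands is the final step: after this substitution both your reformulation and the paper's reduce to the same degree-four inequality
$$1-5a+8a^2-6.5a^3+2a^4\ \ge\ 0 \quad\text{on } [1/4,1/3],$$
which, as you anticipated, is too tight near $a=1/3$ for crude bounds. The paper closes it via the identity
$$1-5a+8a^2-6.5a^3+2a^4 \;=\; (1-3a)\bigl(1-2a+2a^2-0.5a^3\bigr)+0.5a^4,$$
with $1-3a\ge 0$ on the interval and $1-2a+2a^2-0.5a^3 = 1-2a(1-a)-0.5a^3\ge 1-0.5-0.5=0$ for $a\in[0,1]$. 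That explicit factorization is the missing detail needed to turn ``easy to verify directly'' into an actual proof.
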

				\begin{proof} Note that $4a^2-5a \leq 0$ since $a\leq 5/4$. Meanwhile $s = \sqrt{a+1}-1 \geq a/2 - a^2/8$ since $a+1 \ge (1+a/2 - a^2/8)^2 = 1+a - a^3/8 + a^4/64$ since $0\leq a\leq 8$. Thus
				
				$$\begin{aligned} f(s\sigma) &= \frac{1}{8(1-a)} \left( (1+4a-4a^2)(1+s)^2\sigma^2 + 4(4a^2-5a)s\sigma^2 \right)\\
																		 &\leq \frac{\sigma^2}{8(1-a)} \left( (1+4a-4a^2)(a+1) + 4(4a^2-5a)(a/2 - a^2/8) \right)\\
																		 &= \frac{\sigma^2}{8(1-a)}\left( 1 + 5a - 10a^2 + 6.5 a^3 - 2a^4  \right). \end{aligned}$$								
				
				However, 
				
				$$\begin{aligned} &\frac{1+a}{4} - \frac{1+5a-10a^2+6.5a^3-2a^4}{8(1-a)} \\
													= & \frac{2-2a^2 - (1+5a-10a^2+6.5a^3-2a^4)}{8(1-a)}\\
													= & \frac{1-5a+8a^2-6.5a^3 +2a^4}{8(1-a)}\\
													= & \frac{(1-3a)(1-2a+2a^2-.5a^3)+.5a^4}{8(1-a)}.\end{aligned}$$
													
			Yet $1-3a\ge 0$ as $a\le 1/3$, $1-a\ge 0$ as $a\le 1$, and $1-2a+2a^2-.5a^3 = 1 - 2a(1-a)-.5a^3 \ge 1-2(.5)^2 - .5(1)^3 = 0$ as $0\le a \le 1$. Thus the difference in the equation above is at least $0$ and hence $\frac{1+a}{4}\sigma^2$ is at least $f(s\sigma)$ as desired.
			\end{proof}
							
			\begin{claim}\label{Second}
			$f(\frac{\sigma}{3-2a}) \le \frac{1+a}{4}\sigma^2$.
			\end{claim}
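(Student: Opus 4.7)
The plan is to substitute $t = \sigma/(3-2a)$ directly into $f$ and reduce the claim to a polynomial inequality in $a$. My first move would be to set $k = 3-2a$ so that $t = \sigma/k$ and (crucially) $k-1 = 2(1-a)$. I would then use the identity $2-12a+8a^2 = -2(1+4a-4a^2) + 4(1-a)$ to regroup the three terms in $f$ as
\[
f(\sigma/k) \;=\; \frac{\sigma^2}{8(1-a)k^2}\Bigl((1+4a-4a^2)(k-1)^2 + 4(1-a)k\Bigr),
\]
so that the factor $(k-1)^2 = 4(1-a)^2$ absorbs the $(1-a)$ in the denominator. A short simplification, using $(1-a)(1+4a-4a^2) = 1+3a-8a^2+4a^3$, then yields
\[
f\!\left(\frac{\sigma}{3-2a}\right) \;=\; \frac{\sigma^2\,(4+a-8a^2+4a^3)}{2(3-2a)^2}.
\]

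Next, I would clear denominators in the target inequality $f(\sigma/(3-2a)) \le \frac{1+a}{4}\sigma^2$, using that $(3-2a)^2 > 0$. After expanding $(1+a)(3-2a)^2 = 9-3a-8a^2+4a^3$ and subtracting $2(4+a-8a^2+4a^3) = 8+2a-16a^2+8a^3$, the claim reduces to the single polynomial inequality
\[
1 - 5a + 8a^2 - 4a^3 \;\ge\; 0.
\]

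The main (and only) nontrivial step is to notice that this cubic factors as $(1-a)(2a-1)^2$, at which point non-negativity on $a \in [\frac{1}{4},\frac{1}{3}]$ is immediate since both factors are manifestly non-negative there (and in fact on all of $[0,1]$). I do not foresee a serious obstacle: the only real risk is picking the wrong grouping when simplifying $f$, which would leave a messy quadratic expression in $a$ that does not factor cleanly. The choice $k = 3-2a$, combined with the regrouping above that exploits $k-1 = 2(1-a)$, is what makes everything collapse to a three-line calculation ending in the tidy factorization $(1-a)(2a-1)^2$.
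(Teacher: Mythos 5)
Your proof is correct and takes essentially the same approach as the paper: direct substitution of $t=\sigma/(3-2a)$, algebraic simplification to $f(\sigma/(3-2a)) = \frac{4a^3-8a^2+a+4}{2(3-2a)^2}\sigma^2$, and then reduction of the desired inequality to the factorization $1-5a+8a^2-4a^3=(1-a)(2a-1)^2\ge 0$. Your substitution $k=3-2a$ together with the regrouping $2-12a+8a^2 = -2(1+4a-4a^2)+4(1-a)$ is a slightly tidier route to that intermediate form (the paper first writes out a quartic numerator $-16a^4+48a^3-36a^2-12a+16$ before cancelling the $(1-a)$ factor), but the key identity and the concluding inequality are identical.
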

			\begin{proof} Note that 
			
			$$f\left(\frac{\sigma}{3-2a}\right)= \frac{-16a^4+48a^3-36a^2-12a+16}{8(1-a)(3-2a)^2}\sigma^2.$$ 
			
			But $-16a^4+48a^3-36a^2-12a+16 = 4(1-a)(4a^3-8a^2+a+4)\sigma^2$ and hence 
			
			$$f\left(\frac{\sigma}{3-2a}\right) = \frac{4a^3-8a^2+a+4}{2(3-2a)^2}\sigma^2.$$ 
			
			However, 
				
				$$\begin{aligned} &\frac{1+a}{4} - \frac{4a^3-8a^2+a+4}{2(3-2a)^2} \\
				                 = & \frac{(1+a)(3-2a)^2 - 2(4a^3-8a^2+a+4)}{4(3-2a)^2}\\
												 = & \frac{(9-3a-8a^2+4a^3)-(8a^3-16a^2+2a+8)}{4(3-2a)^2}\\
												 = & \frac{1-5a+8a^2-4a^3}{4(3-2a)^2}\\
												 = & \frac{(1-a)(2a-1)^2}{4(3-2a)^2}. \end{aligned}$$
				
				Thus since $a\le 1$, this is at least $0$. Hence $\frac{1+a}{4}\sigma^2$ is at least $f\left(\frac{\sigma}{3-2a}\right)$, as desired.
				\end{proof}
					
        Thus $|E(H)|\le \max\left(f(s\sigma),f\left(\frac{\sigma}{3-2a}\right)\right)$, which by Claims~\ref{First} and~\ref{Second} is at most $\frac{1+a}{4}\sigma^2$ as desired.\\
													
		\noindent{\bf Case 2.2: $\frac{(1-2a) \sigma + d(y)} {2(1-a)} > \sigma-d(y)$, that is $d(y)\geq \frac{\sigma}{3-2a}$.}
		
			By (\ref{average}), $$|E(H)|\leq \frac{1} {2} \left( (1-a) d(x)\left( \frac{(1-2a) \sigma + d(y)} {1-a} - d(x)\right) -2ad(y)\sigma + 2a\sigma^2 + a d(y)^2 \right).$$

			Let 
			
			$$g(t)=\left( \frac{(1-2a) \sigma + d(y)} {1-a} - t\right)t=\frac{(1-2a) \sigma + d(y)} {1-a} t - t^2.$$ 
			
			So $g$ is a second-degree polynomial whose leading coefficient is $-1\leq 0$ and hence is concave. Yet $g$ is maximized when $t=\frac{(1-2a) \sigma + d(y)} {2(1-a)}$ which is greater than $\sigma-d(y)$. So $g$ is an increasing function on $[0,\sigma-d(y)]$ and hence 
			
			$$\max_{t\in[0,\sigma-d(y)]} g(t)=g(\sigma-d(y)).$$

			Thus
			
			$\begin{aligned}
				|E(H)|\leq& \frac{1} {2} \left( (1-a) (\sigma-d(y))\left( \frac{(1-2a) \sigma + d(y)} {1-a} - (\sigma-d(y))\right) -2a d(y)\sigma + 2a\sigma^2 + a d(y)^2 \right) \\
					\leq& \frac{1} {2} \left( a\sigma^2- 2(1-a) d(y)^2 + 2(1-a) d(y) \sigma\right) \\
					\leq& \frac{1} {2} \left( a\sigma^2 + 2(1-a) d(y)(\sigma-d(y))\right) \\
					\leq& \frac{1} {2} \left( a\sigma^2 + 2(1-a) \frac{\sigma^2}{4}\right) \\
					\leq& \frac{1} {8} \left( (4a+2-2a)\sigma^2\right) \\
					\leq& \frac{1+a}{4} \sigma^2.
			\end{aligned}$
		










	\end{proof}

\section{Proof of the Stability Result}\label{sec:stab}

We prove Theorem~\ref{stability} in two parts. First we prove that if there is a large clique in $L(G)^2$, then there are two sets of size at most $\Delta$ with many edges between them as follows.

\begin{lemma}\label{stability1}
Let $\varepsilon\in [0,1]$. If $G=(A,B)$ is a bipartite graph and $H$ is a subgraph of $G$ such that $E(H)$ is a clique in $L(G)^2$ and $|E(H)|\geq (1-\varepsilon)\Delta(G)^2$, then there exists $A'\subseteq A, B'\subseteq B$ such that $|A'|,|B'|\leq \Delta(G)$ and $|E(H)\cap E(A',B')|\geq (1-2 \varepsilon - 2 \sqrt{\varepsilon})\Delta(G)^2$.
\end{lemma}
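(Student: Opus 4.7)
The plan is to apply Theorem~\ref{bip_G_sigma_H} to find a vertex $x \in V(H)$ of very high $H$-degree, and then use the $L(G)^2$-clique property to show that almost all edges of $H$ lie in $E(N_G(y), N_G(x))$ for a suitably chosen neighbor $y$ of $x$ in $H$.

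First, I will apply Theorem~\ref{bip_G_sigma_H} to $H$ itself; since $\sigma_G(H) \le 2\Delta$ (writing $\Delta = \Delta(G)$), it yields
$$
(1-\varepsilon)\Delta^2 \;\le\; |E(H)| \;\le\; \Delta(H)\bigl(2\Delta - \Delta(H)\bigr),
$$
which rearranges to $(\Delta(H) - \Delta)^2 \le \varepsilon \Delta^2$, i.e.\ $\Delta(H) \ge (1-\sqrt\varepsilon)\Delta$. Let $x \in V(H)$ achieve this maximum, WLOG $x \in A$, and write $d_H(x) = (1-\alpha)\Delta$ with $\alpha \le \sqrt\varepsilon$. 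Set $B' := N_G(x)$, so $|B'|\le \Delta$ and $N_H(x) \subseteq B'$.

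The crucial step is then to bound edges of $H$ whose $B$-endpoint falls outside $B'$. If $uv \in E(H)$ with $v \notin B'$, then for every $y' \in N_H(x)$ the edges $uv$ and $xy'$ are at distance at most $2$ in $L(G)$; the bipartite structure (which rules out $xu$ and $vy'$ as edges of $G$) together with $v \notin N_G(x)$ forces $uy' \in E(G)$. Hence $u$ belongs to $A^* := \bigcap_{y' \in N_H(x)} N_G(y')$, and since every $u \in A^*$ is adjacent in $G$ to all of $N_H(x) \subseteq B'$, we obtain $|N_G(u) \setminus B'| \le \Delta - |N_H(x)| = \alpha \Delta$. Moreover $|A^*| \le \Delta$ (as $A^* \subseteq N_G(y')$ for any $y' \in N_H(x)$), and so the total number of such edges is at most $\alpha \Delta^2 \le \sqrt\varepsilon\,\Delta^2$.

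Finally, I will pick $y \in N_H(x)$ with $d_H(y)$ as large as possible and set $A' := N_G(y)$. Since the edges of $H$ with $B$-endpoint in $B' \setminus N_H(x)$ number at most $(|B'|-|N_H(x)|)\Delta \le \alpha\Delta^2$, combining with the previous bound gives $\sum_{y' \in N_H(x)} d_H(y') \ge (1-\varepsilon-2\alpha)\Delta^2$, and averaging over $N_H(x)$ yields $d_H(y) \ge (1-\varepsilon - 2\sqrt\varepsilon)\Delta$. The symmetric argument applied to $y$ then bounds the number of edges of $H$ with $A$-endpoint outside $A'$ by $(\varepsilon + 2\sqrt\varepsilon)\Delta^2$. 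Since the two exceptional edge sets are disjoint (each condition forces the other endpoint into the good side), we can combine to obtain the desired bound. The hard part is precisely this final combination: a naive sum of the above estimates gives a bound of the form $(1 - 2\varepsilon - O(\sqrt\varepsilon))\Delta^2$ with slightly worse constants than claimed, and recovering exactly $(1-2\varepsilon - 2\sqrt\varepsilon)\Delta^2$ in the tight regime $\alpha \approx \sqrt\varepsilon$ requires careful accounting, likely by sharpening the averaging step or by being more clever about how the set $A^*$ (and the symmetric $B^*$) is bounded.
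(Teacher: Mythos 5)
Your approach is essentially the one in the paper, but the specific choice of $y$ is what prevents you from hitting the claimed constant, and you correctly sense this at the end. The paper avoids the loss by a small but crucial change: it takes $y$ to be the vertex of $B$ of maximum $H$-degree, not merely the maximum over $N_H(x)$.

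Concretely, with your setup you need to lower-bound $d_H(y)$ with $y\in N_H(x)$, and your averaging has to discard two lots of edges of size $\alpha\Delta^2$ (those with $B$-end outside $B'$ and those with $B$-end in $B'\setminus N_H(x)$), which gives only $d_H(y)\ge(1-\varepsilon-2\alpha)\Delta$. Plugging this in produces $(1-2\varepsilon-3\sqrt\varepsilon)\Delta^2$, not $(1-2\varepsilon-2\sqrt\varepsilon)\Delta^2$. The paper instead argues: at least $(1-\varepsilon-\sqrt\varepsilon)\Delta^2$ edges of $H$ have a $B$-endpoint in $B'=N_G(x)$ (you only subtract $|E_1|\le\sqrt\varepsilon\Delta^2$ once), and since $|B'|\le\Delta$ the pigeonhole principle gives some $v\in B'$ with $d_H(v)\ge(1-\varepsilon-\sqrt\varepsilon)\Delta$; since $y$ is chosen to maximize $d_H$ over all of $B$, one gets $d_H(y)\ge d_H(v)\ge(1-\varepsilon-\sqrt\varepsilon)\Delta$. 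The symmetric argument you describe (via $S_y$ and the fact that $|S_y|\le\Delta$) then bounds the $A$-side exceptional set by $(\varepsilon+\sqrt\varepsilon)\Delta^2$, and summing gives exactly $(1-2\varepsilon-2\sqrt\varepsilon)\Delta^2$. Note that you do not need the two exceptional edge sets to be disjoint — a plain union bound suffices — so there is no reason to pay the price of forcing $y\in N_H(x)$.
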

	\begin{proof}
	  Let $a$ be a vertex of $A$ such that $d_H(a)$ is maximum over all vertices in $A$. Let $S_a = \{u\in A: N_H(u)\setminus N_G(a) \ne \emptyset\}$. Note that every vertex of $S_a$ is adjacent in $G$ to every vertex of $N_H(a)$ since otherwise $E(H)$ is not a clique in $L(G)^2$. Let $E_a$ be the set of edges of $H$ not incident with a vertex in $N_G(a)$. Note that $|E_a| \le |S_a|(\Delta(G)-|N_H(a)|)$. Yet $|S_a|\le \Delta(G)$ since $S_a$ is contained in the neighborhood of any vertex in $N_H(a)$. Thus $|E_a| \le \Delta(G) (\Delta(G)-|N_H(a)|)$.
		
		Similarly let $b$ be a vertex of $B$ such that $d_H(b)$ is maximum over all vertices in $B$. Let $S_b = \{v\in B: N_H(v)\setminus N_G(b) \ne \emptyset\}$ and let $E_b$ be the set of edges of $H$ not incident with a vertex in $N_G(b)$. A symmetric argument to the one above shows that $|E_b| \le \Delta(G) (\Delta(G)-|N_H(b)|)$. 
		
		Let $B'=N_G(a)$ and $A'=N_G(b)$. Note that $|A'|,|B'|\le \Delta(G)$. Moreover, $|E(H)\cap E(A',B')| \ge |E(H)|-|E_a|-|E_b| \ge |E(H)| - \Delta(G)(2\Delta(G) - |N_H(a)| - |N_H(b)|)$.
		
	  Now we may assume without loss of generality that $d_H(a)\ge d_H(b)$. Thus $d_H(a)=\Delta(H)$. By Theorem \ref{bip_G_sigma_H}, $|E(H)| \leq \Delta(H)(2\Delta(G)-\Delta(H)|$. Thus $\Delta(H) \ge (1-\sqrt{\varepsilon})\Delta(G)$, for otherwise $|E(H)| < (1-\sqrt{\varepsilon})(1+\sqrt{\varepsilon})\Delta(G)^2 = (1-\varepsilon)\Delta(G)^2$, contrary to our assumption.
		
		But then $|E_a| \le \sqrt{\varepsilon}\Delta(G)^2$. So the number of edges of $H$ with one end in $B'$ is at least $(1-\varepsilon-\sqrt{\varepsilon})\Delta(G)^2$. Thus there must exist a vertex $v$ in $B'$ such that $|N_H(v)| \ge (1-\varepsilon-\sqrt{\varepsilon})\Delta(G)$ since $|B'|\le \Delta(G)$. But then $|N_H(b)|\ge |N_H(v)| \ge (1-\varepsilon-\sqrt{\varepsilon})\Delta(G)$ and hence $|E_b| \le (\varepsilon + \sqrt{\varepsilon})\Delta(G)^2$. Thus 
		
		$$|E(H)\cap E(A',B')| \ge |E(H)|-|E_a|-|E_b| \ge (1-2\varepsilon - 2\sqrt{\varepsilon})\Delta(G)^2,$$
		
		as desired.
	
	\end{proof}

Yet we can also prove that two such sets with many edges between them must contain a large complete bipartite subgraph as follows.

\begin{lemma}\label{stability2}
Let $\alpha\in [0,1]$. If $G=(A,B)$ is a bipartite graph with $|A|,|B|\leq n$ and $H$ is a subgraph of $G$ such that $|E(H)|\geq (1-\alpha)n^2$ and $E(H)$ is a clique in $L(G)^2$, then there exists a subgraph $J$ of $G$ isomorphic to $K_{r,r}$ where $r=(1-\sqrt{2\alpha})n$. 
\end{lemma}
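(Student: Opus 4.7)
My plan is to pull out, from each side, the set of vertices that are ``heavy'' in $H$, and then to use the clique hypothesis to show that these two sets are either already biclique-connected in $G$, or else admit a non-$G$-edge between them whose very existence forces a large biclique elsewhere.

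Set $c := 1-\sqrt{\alpha/2}$ and define
\[ T_A := \{a \in A : d_H(a) \geq cn\}, \qquad T_B := \{b \in B : d_H(b) \geq cn\}.\]
From $|E(H)| \geq (1-\alpha)n^2$, $|A| \leq n$, and $d_H(a) \leq n$ for every $a\in A$, a simple averaging bound gives
\[ (1-\alpha)n^2 \leq |T_A|\cdot n + (|A|-|T_A|)\cdot cn \leq (1-c)n\,|T_A| + cn^2,\]
so $|T_A| \geq \frac{(1-\alpha-c)n}{1-c} = (1-\sqrt{2\alpha})n = r$, and by symmetry $|T_B| \geq r$.

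I now split into two cases. If $T_A\times T_B \subseteq E(G)$, then $G$ already contains a complete bipartite subgraph on $T_A$ and $T_B$, which in particular contains $K_{r,r}$, and we are done. Otherwise, pick any $a_0 \in T_A$ and $b_0 \in T_B$ with $a_0 b_0 \notin E(G)$, and define $A_0 := N_H(b_0)$ and $B_0 := N_H(a_0)$. Since $E(H) \subseteq E(G)$ and $a_0 b_0 \notin E(G)$, we have $a_0 \notin A_0$ and $b_0 \notin B_0$; also $|A_0|, |B_0| \geq cn \geq r$. The key claim is that $A_0 \times B_0 \subseteq E(G)$. To see this, fix $(a,b) \in A_0 \times B_0$, so that $a b_0, a_0 b \in E(H)$. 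By hypothesis these two edges lie at distance at most $2$ in $L(G)$; since $a \neq a_0$ and $b \neq b_0$ they share no vertex, so they must be joined by a single $G$-edge, which is necessarily one of $a_0 b_0$ or $ab$. Ruling out the former forces $ab \in E(G)$, proving the claim. Taking any $r$-subsets of $A_0$ and $B_0$ then yields a copy of $K_{r,r}$ in $G$.

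The main step to identify is the dichotomy above: either the heavy vertices are themselves biclique-connected in $G$, or a single non-$G$-edge between a heavy pair $(a_0,b_0)$ pins down, via the clique-in-$L(G)^2$ hypothesis, that $N_H(b_0) \times N_H(a_0)$ is entirely contained in $E(G)$. The threshold $c = 1-\sqrt{\alpha/2}$ is chosen precisely so that all four of $|T_A|, |T_B|, |A_0|, |B_0|$ are at least $r$ simultaneously.
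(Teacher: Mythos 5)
Your proof is correct, and it takes a genuinely different route from the paper's. The paper passes to the bipartite complement $G'$ between the parts, takes a maximum matching $M$ and a minimum vertex cover $C$ of $G'$, observes that for any two matching edges $a_1b_1, a_2b_2\in M$ the pairs $a_1b_2$ and $a_2b_1$ lie at distance at least $3$ in $G$ so at least one is missing from $H$, deduces $|E(H)|\le n^2-m^2/2$ and hence $m\le\sqrt{2\alpha}\,n$, and then invokes K\"onig's theorem to conclude that deleting the $m$ cover vertices leaves a complete bipartite graph on $n-m\ge r$ vertices per side. Your argument instead isolates, by a straightforward averaging computation, the sets of ``heavy'' vertices $T_A,T_B$ of $H$-degree at least $cn$ with $c=1-\sqrt{\alpha/2}$, and then runs a clean dichotomy: either $T_A\times T_B\subseteq E(G)$ and you are done immediately, or a single missing pair $a_0b_0$ forces $N_H(b_0)\times N_H(a_0)\subseteq E(G)$ because for any $a\in N_H(b_0)$, $b\in N_H(a_0)$ the edges $ab_0,a_0b\in E(H)$ must be joined by a $G$-edge, and bipartiteness leaves only $ab$ or $a_0b_0$ as candidates, the latter being excluded. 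Your approach avoids K\"onig's theorem entirely and uses the $L(G)^2$-clique hypothesis in a more local way, which is arguably more transparent; the paper's matching argument is somewhat slicker in its counting but leans on more machinery. One small presentational gap worth filling: when you assert the connecting $G$-edge ``is necessarily one of $a_0b_0$ or $ab$,'' you should state explicitly that the other two candidates $aa_0$ and $bb_0$ are ruled out by bipartiteness, since that is the step where bipartiteness is essential.
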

\begin{proof}
We may assume without loss of generality that $|A|=|B|=n$ for otherwise we may add isolated vertices to $A$ and $B$ while maintaining the hypotheses of the lemma. Now let $G'$ be the graph such that $V(G')=(A,B)$ and $E(G')=\{ab\notin E(G): a\in A, b\in B\}$. Let $C$ be a minimum vertex cover of $G'$ and $M$ be a maximum matching of $G'$, say of size $m$. By K\"onig's theorem, $|V(C)|=m$. 

If $e_1=a_1b_1,e_2=a_2b_2$ are two distinct edges of $M$, then at least one of $a_1b_2,a_2b_1$ must not be an edge in $H$ as otherwise $E(H)$ is not a clique in $L(G)^2$ since the distance between $a_1b_2$ and $a_2b_1$ is at least $3$ in $G$. But there are at least $m(m-1)/2$ such pairs. So $|E(H)|\le |A||B|-m-\frac{m(m-1)}{2} \le n^2 - m^2/2$. Since $|E(H)|\ge (1-\alpha)n^2$ we find that $m\le \sqrt{2\alpha} n$. Yet $|V(C)|=m$. 

Now $G[V(G)-V(C)]$ is a complete bipartite graph since every edge of $G'$ is incident with a vertex of $C$. Let $A'\subseteq A-V(C)$ of size $r$ and $B'\subseteq B-V(C)$ of size $r$, which is possible since $m \le n-r$. Let $J=G[A'\cup B']$. Now $J$ is isomorphic to $K_{r,r}$ as desired.  
\end{proof}

We are now ready to prove Theorem~\ref{stability} using the two lemmas above.

\begin{proof}[Proof of Theorem~\ref{stability}.]
Let $G=(A,B)$ be a bipartition of $G$. Let $H$ be a subgraph of $G$ such that $E(H)$ is a clique in $L(G)^2$ and $|E(H)|\geq (1-\varepsilon)\Delta(G)^2$. By Lemma~\ref{stability1}, there exist $A'\subseteq A, B'\subseteq B$ such that $|A'|,|B'|\leq \Delta(G)$ and $|E(H)\cap E(A',B')|\geq (1-2 \varepsilon - 2 \sqrt{\varepsilon})\Delta(G)^2$. 

Let $G'=G[A'\cup B']$ and let $H'= H[A'\cup B']$. Now $H'$ is a subgraph of $G'$. Moreover, $E(H')$ is a clique in $L(G)^2$ since $E(H)$ is a clique in $L(G)^2$. But then $E(H')$ is a clique in $L(G[V(H')])^2 \subseteq L(G')^2$.  

Apply Lemma~\ref{stability2} to $G'=(A',B')$ and $H'$ with $n=\Delta(G)$ and $\alpha = 2\varepsilon +2\sqrt{\varepsilon} \le 4\sqrt{\varepsilon}$. Thus $G'$ contains a subgraph isomorphic to $K_{r,r}$ where $r=(1-\sqrt{2\alpha})n\geq (1-\sqrt{8\sqrt{\varepsilon}})\Delta(G) = (1-\sqrt{8}\varepsilon^{1/4})\Delta(G)$ as desired. 
\end{proof}

\end{document}